\DeclareMathOperator{\regret}{regret}
\newtheorem{theorem}{Theorem}[section]
\newtheorem{lemma}[theorem]{Lemma}
\theoremstyle{definition}
\newtheorem{definition}{Definition}[section]
\newtheorem{assumption}{Assumption}[section]
\title{Iterated Piecewise-Stationary Random Functions\footnote{
 R.G. and R.S. are with the Department of Electronics and Electrical Engineering, 
University College Dublin, 
Ireland, Belfield, Dublin 4, \texttt{ramen.ghosh@ucdconnect.ie} and \texttt{robert.shorten@ucd.ie}.
J.M. is with IBM Research -- Ireland, B3 F14, 
Technology Campus Damastown, Mulhuddart, Dublin 15, Ireland,
\texttt{jakub.marecek@ie.ibm.com}.
This work was in part supported by Science Foundation Ireland grant 16/IA/4610.
}
}
\author{Ramen Ghosh, Jakub Marecek, and Robert Shorten}
\newenvironment{keywords}
    {
    }
\begin{document}
\setlength{\parskip}{3pt} 
\setlength{\parindent}{6pt}

\maketitle
\begin{abstract}
Within the study of uncertain dynamical systems, iterated random functions
are a key tool. There, one samples a family of functions according to a stationary distribution. Here, we introduce an extension, where one sample functions according to a time-varying distribution over the family of functions. For such iterated piecewise-stationary random functions on Polish spaces, we prove a number of results, including a bound on the tracking error.
\end{abstract}

\begin{keywords}
\emph{Keywords:}
iterated random functions; 
iterated function system;
Markov chain; 
Polish space;
stochastic systems
\end{keywords}

\section{Introduction}

In the design of distributed systems, a fundamental requirement that underpin many emerging business models is the need to monetize a service. To monetize a service, one needs to define levels of service, and develop resource-allocation strategies that guarantee that these levels are satisfied for individuals making use of that service. 
Roughly speaking, this necessitates the need for resource-allocation strategies that not only maximize (weighted) resource utilisation (i.e., profit), but also allocate the resource to agents in a manner that is predictable (and possibly fair) over certain time scales.
This is often complicated by the fact that many distributed systems of interest either involve humans (for example, when congestion information is sent to road users \cite{marevcek2015signaling,marevcek2016signaling}, who react to this information in their route choice) or strategies that are stochastic in nature (for example, when a randomized route-choice algorithm in a self-driving car determines the route based on congestion information). We refer to such situations as systems made up of ensembles of agents with a {\em probabilistic intent}.

Overall, our goal is to manage the response of such an ensemble of agents so that the system is driven to a predictable and desirable equilibrium. To this end, we need to model such an ensemble of agents. Some human-like behaviours and many complex decision-making strategies can be as modelled by \emph{iterated random functions}  \cite{Diaconis1999,FIORAVANTI2019}. In such systems, agents respond to a signal (for example, a price) or a system state with a probability that may depend on the state of the system (i.e., be ``place-dependent''). Strong results are known for iterated function systems. In particular, ensembles of agents have ergodic properties under assumptions known as contraction on average and very benign further assumptions on the place-dependent probabilities. This is appealing both as a means of capturing the behaviour of ensembles of agents, and as a basis for developing strategies for managing such behaviour at a scale that lend themselves to contract design.

In this paper, we ask how to model an ensemble, which changes over time, perhaps restricted to changing countably many times, with two subsequent ensembles not being too different. One could consider an iterated function system with an infinite number of functions and place-dependent probabilities, but this may make it difficult to derive sharp results. Instead, we model the time-varying stochastic dynamical system by an iterated function system with a finite number of Lipschitz maps on a Polish space with the measure for sampling them changing  countably many times. We call this setting the \emph{iterated piecewise-stationary random functions}. This setting is rather general: most results for iterated function systems are restricted to compact metric spaces, and all results we are aware of assume that the measure for sampling the functions remains stationary throughout.  In this setting, we define an ergodic property, which we call the existence and uniqueness a piecewise-invariant probability measure,
establish a sufficient condition for this property to hold,   and present an estimate of distances between any two subsequent invariant probability measures in a Wasserstein-type metric, leading to a bound on the tracking error in terms of the measure over the state space, and a regret bound.

Our paper is structured as follows. In Section $2$, we present some background and highlight our main contributions. Section $3$ contains relevant mathematical tools, definition, notations, and terminology. Section $4$ contains foundation of the new framework of a piecewise-stationary random iteration of Lipschitz self maps on a Polish space and 
the corresponding discrete-time Markov chain on two time-scales.
Section $5$ contains an estimate of the distance between any two subsequent invariant measures arising from such a process, a tracking error and a regret bound. Finally, in Section $6$, we showcase some computational illustrations.

\section{Background and Contribution}

We represent a dynamical system by $(\mathcal X, P)$ where $\mathcal X$ is a measurable state space, and a measurable map $P:\mathcal X \to \mathcal X$. Intuitively, the ergodicity of a dynamical system reveals its long-term statistical behaviours by relating the dynamical properties of the system to the properties of the evolution of measures of the system. In some situations, complex dynamical systems become much more tractable once when the evolution of measures are studied rather than of points. In what follows, this is the approach that we follow. To that end, let $\mathcal M_p(\mathcal X)$, $\nu$ and $\mathcal B(\mathcal X)$ be the space of all probability measures, a probability measure and a Borel sigma-algebra on $\mathcal X$ respectively, one can naturally define $P^*: \mathcal M_p(\mathcal X)\to \mathcal M_p(\mathcal X) : (P^*\mu)(A)=\mu(P^{-1}(A))$. When investigating the properties of the dynamical system $(\mathcal M_p(\mathcal X), P^*)$, the first relevant question is the study of the fixed points, that is the invariant measures: does there exists a $\nu\in \mathcal M_p(\mathcal X)\text{ such that } \nu(A)=\mu(P^{-1}(A))\quad \forall A\in \mathcal B(\mathcal X)$? Given such an invariant measure $\nu$, one can define the measurable dynamical system $(\mathcal X, P, \nu)$. The invariant measure represents equilibrium states, in the sense that the probabilities of events do not change in time.

Within the study of non-linear stochastic dynamical systems, iterated random functions \cite{Diaconis1999} are a key tool. In this setting, a random dynamical system arises by sampling a family of Lipschitz functions according to a stationary probability distribution and applying the sampled functions. An important characteristic of a stochastic dynamical system is its long term behaviour which can be described in terms of a unique measure that mark out the distribution of the process when the time $t\to \infty$. It is, thus, natural for simulating, and important to investigate that such a measure in fact exists, and is unique. Existence of such a measure is easy to show if the functions involved in the process have some regularity. In particular, if the Lipschitz functions $f_1,\dots, f_m$ with Lipschitz constants $L_1, L_2, \dots, L_m$ are given with probabilities $p_1, p_2,\dots, p_m$, one can define Markov operator $P$ on the space of all family of probability Borel measure on a complete metric space $\mathcal X$ and the condition $\sum\limits p_i L_i<1$ assures the existence of an \emph{invariant} measure, i.e., asymptotic stability of $P$. Even if the process is not contractive, a powerful theorem of ergodicity is available in \cite{breiman1960}. Our objective in this short paper is to introduce an extension towards \emph{iterated piecewise-stationary random functions}, where one samples a family of Lipschitz functions according to a time-varying distribution over the same family of functions. Such extensions are important in applications where the number of agents may change over time (i.e., in almost all smart-city type applications, cf. \cite{marevcek2015signaling,marevcek2016signaling,FIORAVANTI2019}). Specifically, a criterion, implying uniqueness of piecewise-invariant measures in Polish space is developed in this note. This allows for the use of (quasi-) Monte-Carlo approaches in estimating the behaviour in situations, where the system (e.g., numbers or preferences of agents) change over time. Specifically, the contributions of this note are as follows. 
\begin{itemize}
\item We introduce the concept of iterated piecewise-stationary random functions in the general setting of Polish spaces. It can also be thought of as two-time scale homogeneous Markov chain on general state-space.
A Markov operator on finite Borel measures on the state space is associated with the Markov chain and subsequently shown to be Feller. 
\item We suggest a criterion for the existence and uniqueness of a piecewise-invariant measure, an ergodic property of iterated piecewise-stationary random functions.
Essentially, we show that the associated Markov operators are contraction on the space of probability measures under the condition of contraction on averages of the Lipschitz constants.
\item We initiate a study of the tracking error and regret in this setting. This is related to the  rate of convergence or mixing, in absolute terms and relative to the best possible rate achievable, respectively.
\end{itemize}

{\bf Comment:} In our context, it is important to note that when the underlying metric space is compact, or the process evolves on a compact subset of the metric space, the standard way of showing existence and uniqueness of invariant measure is to first construct a positive and invariant linear functional on the space of bounded continuous maps, and then using Riesz representation theorem \cite{Riesz09} to conclude the existence of an invariant measure. In contrast, on an unbounded, complete, separable metric space, this idea breaks down, since a positive functional may not correspond to a measure, i.e., the dual space may not corresponds to the space of signed measures on the space, in general, even if the underlying space is locally compact Hausdorff. There, in Polish space, the concept of uniform tightness and tightness of probability measures plays an important role to investigate the invariant probability measure which is introduced in Section $4.2$ and we refer \cite{Modern97,billing} for further details about the concept.


\section{Mathematical Preliminaries}
We now present some basic results and definitions 
that are necessary for the discussion in the sequel.
\begin{definition}
$(\mathcal X, \rho)$ be a Polish space, i.e., a complete and separable metric space. Let $2^{\mathcal X}$ denote the set of all subsets of $\mathcal X$. Now $\mathcal X$ is equipped with a $\sigma$-algebra, i.e., $\mathcal A\subseteq 2^{\mathcal X}$ such that:
\begin{itemize}
\item[(i)] $\emptyset,\mathcal X\in \mathcal A$;\\
\item[(ii)] $A\in \mathcal A\Rightarrow A^c\in \mathcal A, A^c \text{ denotes the complement of } A$ in $\mathcal X$;
\item[(iii)] $\{A_0, A_1,\dots,\}\subseteq \mathcal A\Rightarrow \bigcup\limits_{n=0}^{\infty} A_n\in \mathcal A \text{ and } \bigcap\limits_{n=0}^{\infty} A_n\in \mathcal A$.
\end{itemize}

Then, the pair $(\mathcal X, \mathcal A)$ is called a measurable Polish space, and elements of $\mathcal A$ are called measurable sets. Furthermore, a function between measurable polish space is measurable if pre-images of measurable sets are measurable sets. 
\end{definition}
Now let us introduce measure and probability measure on the Polish space $(\mathcal X, \rho)$.
\begin{definition} 
Given a measurable Polish space $(\mathcal X, \mathcal A)$ , a measure on $\mathcal X$ is a function $\nu: \mathcal A \to \mathbb [0,\infty)$ that satisfies the following conditions:

\begin{itemize}
\item[(i)] $\nu(\emptyset)=0$;\\
\item[(ii)] $\{A_0, A_1,\dots\}\subseteq \mathcal A$ is a collection of pairwise disjoint sets, then 
\begin{align*}
\nu(\bigcup\limits_{n=0}^{\infty} A_n)=\sum\limits_{n=0}^{\infty} \nu(A_n).
\end{align*}
\end{itemize}
\end{definition}
\begin{definition}
A probability measure on $\mathcal X$ is a function $\nu:\mathcal A\to [0,1]$ that satisfies the following conditions:
\begin{itemize}
\item[(i)] $\nu(\emptyset)=0$;\\
\item[(ii)] $\{A_0, A_1,\dots\}\subseteq \mathcal A$ is a collection of pairwise disjoint sets, then 
\begin{align*}
\nu(\bigcup\limits_{n=0}^{\infty} A_n)=\sum\limits_{n=0}^{\infty} \nu(A_n).
\end{align*}
\item[(iii)] $\nu(X)=1$.
\end{itemize}
\end{definition}

\begin{definition}
$f:(\mathcal X,\mathcal A,\nu)\to \mathcal X$ is called an $\mathcal X$ valued random variable iff $f^{-1}(A)\in \mathcal A$ for every $A\in \mathcal A$. 
\end{definition}

When $\mathcal X$ is considered as measurable space, we ensure that the smallest $\sigma$-algebra which consists of all open sets is endowed naturally to remove any pathology. Also, this will assure that any probability measure defined on it is a Lebesgue–Rokhlin probability space or simply as Lebesgue space and results such as Fubini theorem and regular conditional probabilities are applicable. 
Before proceeding, we now us introduce the following notation and terminology. 
\begin{itemize}
\item
$\mathcal{B}(\mathcal X)$ denotes the Borel $\sigma$ algebra (the smallest $\sigma$ algebra which contains all open sets of $\mathcal X$).
\item $C(\mathcal X,\mathbb R)$ be the Banach space of all continuous function on $\mathcal X$, endowed with the supremum norm $\|\cdot\|_{\infty}$.
\item $C_b(\mathcal X,\mathbb R)$ has the same structure as $C(\mathcal X,\mathbb R)$ but with bounded functions.
\item $\mathcal{M}(\mathcal X)$ denotes the real vector space of all signed finite Borel measures on $ \mathcal X$, containing $\mathcal{M}_{f}(\mathcal X)$ the space of all positive measures.
\item $\mathcal{M}_p(\mathcal X)$ denotes the space of all probability measures on $\mathcal X$ contained in $\mathcal{M}_{f}(\mathcal X)$.
\item $f: \mathcal X\to \mathbb R$ is always continuous, Lipschitz, measurable, and may posses some other properties will be mentioned accordingly.
\item The set of natural number from $1$ to $m$ is denoted by $[1,m]:=\{1,\dots, m\}$.
\item $\mathcal G$ denotes the space of all absolutely continuous functions with $|f'(x)|\le 1$ almost everywhere.
\item
An operator $P: C(\mathcal X, \mathbb R)\to C(\mathcal X, \mathbb R)$ is called Feller, if $P(f(x))\in C_b(\mathcal X, \mathbb R)$ for all $f(x)\in C_b(\mathcal X, \mathbb R)$.
\item 
Finally, we call an operator $P^*:\mathcal{M}_{f}(\mathcal X)\to \mathcal{M}_{f}(\mathcal X)$ Markov if 
\begin{align*}
&\textrm{(i)}& P^*(\lambda_1 \nu_1+\lambda_2 \nu_2)&=\lambda_1 P^*(\nu_1)+\lambda_2 P^*(\nu_2) & \quad \forall \lambda_1, \lambda_2 \ge 0, \nu_1, \nu_2 \in \mathcal{M}_f(\mathcal X)\\
&\textrm{(ii)}& P^*(\mu)(\mathcal X)
&=\mu(X) & \quad \forall \mu\in \mathcal{M}_{f}(\mathcal X).
\end{align*}
\item
For
$\nu\in \mathcal M(\mathcal X)$, $\langle f, \nu \rangle :=\int\limits_{\mathcal X} f d\nu$, for any Feller operator say, $P$, mentioned in this article, its dual will be denoted by $P^*$, which we call Markov operators also, and their action is related by $ \langle Pf, \nu\rangle = \langle f, P^* \nu\rangle$, for all the functions $f$ and the measures $\nu$ defined on suitable spaces which will be mentioned accordingly.
\end{itemize}

\begin{definition}[Iterated random functions
\cite{BarnsleyDemkoEltonEtAl1988,Diaconis1999,steinsaltz1999}]
\label{def:irf}
Given a Polish space $(\mathcal X,\rho)$ and a family of maps $\mathcal{F}=\{f_{\theta}: \mathcal X\to \mathcal X, \theta\in \Theta\}$, an iterated function system on the state space $\mathcal X$ is defined by a measure $\nu$ on $\mathcal{X}$ with respect to a $\sigma$-algebra $\mathcal{A}$ which makes the map $\mathcal X\times \mathcal F\to \mathcal X: (x,f_{\theta})\mapsto f_{\theta}(x)$ measurable, where $\theta$'s are chosen with some probability measures $\mu$ on $\Theta$.
\end{definition}

{\bf Comment:} Note the implicit assumption in Definition \ref{def:irf} is that measure $\mu$ does not change over time. In the next section, we show how to relax this assumption.

\section{Iterated Piecewise-Stationary Random Functions}
\subsection{A Problem Statement}

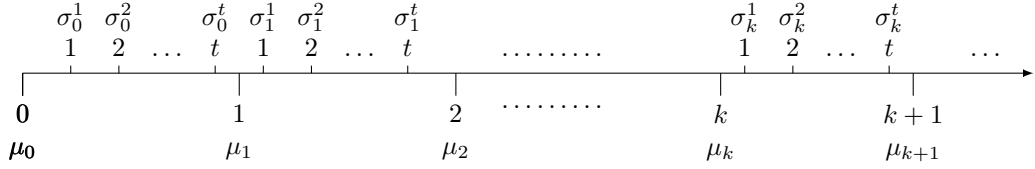
\begin{figure}[tb]
\centering 
\begin{tikzpicture}[x=0.64cm]
\draw[-latex] (0,0) -- (21,0);
\foreach \i [count=\j] in {0,1,5}
 \draw (0,0) -- (0,-.3) node[below,align=center] {0\\$\mu_{{0}}$};
\draw (4.5,0) -- (4.5,-.3) node[below,align=center] {1\\$\mu_{{1}}$};
\draw (9,0) -- (9,-.3) node[below,align=center] {$2$\\$\mu_2$};
\draw (14.5,0) -- (14.5,-.3) node[below,align=center] {$k$\\$\mu_{k}$};
\draw (18.5,0) -- (18.5,-.3) node[below,align=center] {$k+1$\\$\mu_{{k+1}}$};
\foreach \i [count=\j from 0] in {1,5} {
\draw (\i,0) -- (\i,.1) node[above,align=center] {$\sigma_{\j}^{1}$\\1};
\draw (\i+1,0) -- (\i+1,.1) node[above,align=center] {$\sigma_{\j}^{2}$\\2};
\path (\i+2,.1) node[above] {$\ldots$};
\draw (\i+3,0) -- (\i+3,.1) node[above,align=center] {$\sigma_{\j}^{t}$\\$t$};
}
\draw (15,0) -- (15,.1) node[above,align=center] {$\sigma_{k}^{1}$\\1};
\draw (16,0) -- (16,.1) node[above,align=center] {$\sigma_{k}^{2}$\\2};
\path (17,.1) node[above] {$\ldots$};
\draw (18,0) -- (18,.1) node[above,align=center] {$\sigma_{k}^{t}$\\$t$};
\path (20,.1) node[above] {$\ldots$};
\path (11,-.3) node[below] {$\ldots\ldots\ldots$};
\path (11,.1) node[above] {$\ldots\ldots\ldots$};
\end{tikzpicture}
\caption{An illustration of the two time scales, at which we update the measures $\mu_k$ for sampling the functions  and measure $\nu_k^i$ over the state space, respectively. In particular, each application of a function $f_{\sigma_k^i}$, sampled with respect to $\mu_k$, updates the measure $\nu_k^i$  over the state space.}
\end{figure}

We are given a finite set of Lipschitz maps $ \mathcal F=\{f_i: \mathcal X\to\mathcal X\}_{i=1}^{m}$ and a sequence of  probability measures $\{\mu_k\}_{k=0}^{\infty}$ defined on $[1,m]$, where $\mu_k$ denotes the probability measure which is active at time instant $k$ for sampling functions from $\mathcal F$. 
Between $k$ and $k+1$ we can apply $t$-times maps which are chosen from $\mathcal F$ with probability distribution $\mu_k$. Let  $\nu_k\in \mathcal M_p(\mathcal X)$  capture the evolution of state after iterated applications of functions.
Even if we consider the state $X_k$ at time $k$, when a new $\mu_k$ was introduced, to be deterministic, e.g., for $k = 0$, we see that the state after $t$ iterated applications of the functions sampled according to $\mu_k$ is a random variable distributed as:

\begin{align}
\nu_k^t \sim\underbrace{(f_{\sigma_k^{t}}\circ f_{\sigma_k^{t-1}}\circ\cdots\circ f_{\sigma_k^1})}_\text{$t$ iterated applications}(X_k)
\end{align}
where at time step $t$, $\sigma_k^t$ is independent draws from probability measure $\mu_k$ on $[1,m]$, $\circ$ is for composition of functions, and $\mu_k(j):= \mathbb{ P}(\sigma_k^i=j), j\in [1,m]$ for the i.i.d discrete random variables $\sigma_k^1,\sigma_k^2\dots, \sigma_k^i$ taking values in $[1,m]$.

To aid exposition, let us illustrate this with a simple example. In the first step $k = 0$, perhaps starting from a known $X_0^0 \in \mathcal X$, 
with some known $\mu_0 \in \mathcal{M}_p( [1, m] )$, 
we sample a number $\sigma_{0}^{0}$ randomly from $[1,m]$ with probability $\mu_0(\sigma_0^0)$ and move from $X_0^0$ to $X_0^1 \sim f_{\sigma_{0}^1}(X_0^0)$. Next, we select another number $\sigma_0^1$ with support $[1,m]$ with probability $\mu_0(\sigma_0^1)$ to move from $X_1$ to $X_2$ and the process goes on, until some time step $t$, at which we replace our probability measure through which we sample functions from $\mu_0$ to $\mu_1$. 

In the general case, we start from a random variable $X_{k}^i$ supported on $\mathcal X$.
For any Borel set $A\in \mathcal B (\mathcal X)$ we define transitional probability functions, which are probability measure for each fixed $x\in \mathcal X$ and measurable function of $x$ for each fixed $A\in \mathcal{B}(\mathcal X)$, as follows:
\begin{align}
\nu_{k}^{i}(x, A)=\text{ Prob }(X_{k}^{i}\in A| X_k^0=x).
\end{align}

Based on above formulation, we are in a position to define iterated piecewise-stationary random functions as follows:

\begin{definition}[Iterated piecewise-stationary random functions]
Given a Polish space $(\mathcal X,\rho)$, a finite collection of maps $\mathcal{F}=\{f_{i}: \mathcal X\to \mathcal X, i\in [1,m]$, 
an integer $t$ defining a discretisation of time to $(k,i), i \le t$, and an infinite sequence $( \mu_k )_{ k = 1}^{\infty}$ of 
probability measures on $[1,m]$, 
a \emph{iterated piecewise-stationary random functions} on the state space $\mathcal X$ is a stochastic process that evolves ergodically through $\mathcal X$ with evolution equation as follows:
\begin{equation}
\label{tvifs}
X_{k}^{i}=f_{\sigma^{i}_{k}}(X_{k}^{i-1})\quad \text{where } k=0,1,2,3,\dots \text{ and }i=1,2,\dots,t\dots 
\end{equation}
where at time step $i$, $\sigma_k^i$'s are independent draws from probability measure $\mu_k$ on $[1,m]$. 
\end{definition}

Notice that, measure $\mu_k$ over the family of functions changes at certain times $k, k+1, \ldots$,
but in between $k$ and $k+1$, we can perform some number of iterated function applications. For time steps $t$ between $k$ and $k+1$, the conditional distribution of the future depends only on the current state. For the time steps between $k$ and $k+1$, the process $(X_{k}^{t})_t$ is clearly Markovian. The evolution of the measure we sample the functions with, $\forall k, \mu_k$, does not depend on any state. 

\begin{assumption}[Time-varying setting]\label{as:varying}
For time-varying measures $\mu_k$ with which we sample the functions,
$\forall k>0$, the total variation of measures at two subsequent instants $k$ and $k+1$ is never larger than $e$, i.e., 
\begin{align}
\|\mu_k - \mu_{k+1}\|_{TV}=\sum\limits_{j}|\mu_k(j)-\mu_{k+1}(j)| \le e,\forall j \in [1,m], \forall k, j. 
\end{align}
\end{assumption}
That can be seen, in some sense, as a constraint on the changes to be slow-moving. 

\subsection{The Evolution Of The Markov Process}

To investigate the evolution of the above Markov process, introduce the following linear operator $P_k$ when a $\mu_k$ is active. Now, when $\mu_0$ is active, for some $i,j,l\in [1,m]$, if $X_{0}^{0}=x$ then there is probability $\mu_{0}(i)$ that $X_{0}^1=f_{\sigma_0^i}(x)$,
probability $\mu_{0}(j)$ that $X_{0}^1=f_{\sigma_0^j}(x)$, probability $\mu_{0}(l)$ that
$X_{0}^1=f_{\sigma_0^l}(x)$, etc.
Therefore we can write for a general $\mu_k$:
\begin{equation*}
\mathbb{E}[X_{0}^1\mid X_{0}^0=x]= \sum_{j}\mu_k(j) f_{\sigma_k^j}(x)
\end{equation*}
\begin{equation*}
P_kf(x):=\sum_{j}\mu_{k}(j) (f\circ f_{\sigma_k^j})(x)=\mathbb{E}[f(X_{0}^1)\mid X_{0}^0=x]
\end{equation*}
In general, for our setup, define:
\begin{align}
P_k: C(\mathcal X,\mathbb R)\to C(\mathcal X, \mathbb R): 
P_k(f(x))= \mathbb{E}[f(X_{k}^i)\mid X_{k}^{i-1}=x]=\sum\limits_{j=1}^{m} \mu_k(j)(f\circ f_{\sigma_{k}^{j}})(x).
\end{align} 
It is easily noticed that for all $k$, $P_k$ maps any continuous function to a
continuous function which forces the Markov process to be weak Feller.
If $\mathcal X$ were compact, then the dual space of $C(\mathcal X,\mathbb R)$ is the space of all 
signed measures on $\mathcal X$ with total variation norm (by Riesz representation theorem) and the 
adjoint operator of $(5)$ operating on measures on $(\mathcal X, \mathcal{B}(\mathcal X))$ can be deduced as follows:\\
For some measure $\nu\in \mathcal M_f(\mathcal X)$ we have
\begin{align*}
P_k^{*}(\nu)(f)&=\nu (P_kf)=\int P_kf d\nu
=\int \sum_{j=1}^{m} \mu_k(j)(f\circ f_{\sigma_k^j})(x)d\nu
=\sum_{j=1}^{m} \mu_k(j) \int fd(\nu\circ f_{\sigma_{k}^j}^{-1})\\
\end{align*}
Thus, 
\begin{align}
P_k^{*}\nu(A):=\int \mathbb P(X_{k}^{i}\in A| X_{k}^{i-1}=x)d\nu(x)
=\sum\limits_{j=1}^{m} \mu_k(j) \nu (f_{\sigma_k^j}^{-1}(A))
\end{align}

In particular, the adjoint of $(5)$ is, $P^*_k:=\sum\limits_{j=1}^{m}\mu_k(j)\nu\circ f_{\sigma_k^j}^{-1}$.

A central problem is to find conditions for the uniqueness of invariant measures. An iterated function system that has this property is called uniquely ergodic. It might be a very much relevant in terms of practical determination, whether an iterated function system is uniquely ergodic or not, both from a theoretical point and practical standpoint, because that will be easier to conclude simulation results of the process.
An invariant measure for iterated random functions is $\nu_k^*$ for which $P^*_k\nu_k^*=\nu_k^*$. We will show that piecewise-invariant measure exists for the described process 
in Equation \ref{tvifs}, whenever for every $k$, that is between the switching from $\mu_{k-1}$
to $\mu_k$ and switching from $\mu_k$ to $\mu_{k+1}$, there exists an invariant measure $\nu_k^*$, i.e., $P_k^*\nu_k^*=\nu_k^*$.

\section{Main Results}
We now present the main results of this note: namely in Theorem \ref{thm52}, we establish existence of piecewise-invariant measure for the process. In Theorem \ref{thm56}, an estimate of distances between any two subsequent invariant measure is derived. Finally, Theorems \ref{thm:error} and \ref{thm:regret} shows a tracking-error and a loose regret bound on the measures over the state space.

\subsection{The Existence and Uniqueness of Piecewise-Invariant Probability Measure}
\begin{definition}[Uniformly tight measure \cite{bogachev-2007, Modern97, billing}]
An arbitrary $\mathcal M\subseteq \mathcal M_p(\mathcal X)$ is called uniformly tight if $\forall \epsilon >0$
there exists a compact subset $\mathcal K\subseteq \mathcal X$ such that $\nu(\mathcal K)\ge 1-\epsilon\quad \forall \nu\in \mathcal M_{p}(\mathcal X)$.
\end{definition}
It can be shown that on a compact metric space, any family of probability measures is uniformly tight \cite{bogachev-2007, Modern97}  and intuitively, for any other space, probability measures accumulate on compact subsets of the underlying space. 

We now state a standard result related to a uniform tight sequence of measure on Polish space due to Prohorov. 

\begin{theorem}
[Prokhorov \cite{Prokhorov1956ConvergenceOR}] Let $\{\nu_n\}_{n=1}^{\infty} \in \mathcal M_p(\mathcal X)$ be uniformly tight sequence. Then there exists a sub-sequence $\{\nu_{n_k}\}_{k=1}^{\infty}$ of $\{\nu_n\}_{n=1}^{\infty}$ and a $\nu\in \mathcal M_p(\mathcal X)$ such that $\nu_{n_k}\to \nu$ weakly. 
\end{theorem}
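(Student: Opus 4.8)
The statement to be established is the forward half of Prokhorov's theorem: on a Polish space, uniform tightness of a sequence in $\mathcal M_p(\mathcal X)$ forces relative sequential compactness in the topology of weak convergence. The plan is to turn the tightness hypothesis into a $\sigma$-compact exhaustion of the effective support of the sequence, extract a weak-$*$ convergent subsequence by a selection argument that only needs separability of $C(\cdot,\mathbb R)$ on compacta, and then invoke tightness a second time to certify that the limiting object is a genuine \emph{probability} measure on $\mathcal X$ rather than a sub-probability limit that has shed mass ``at infinity.''

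First I would use the uniform tightness hypothesis to choose, for each $j \in \mathbb N$, a compact set $\mathcal K$ with $\nu_n(\mathcal K) \ge 1 - 1/j$ uniformly in $n$; replacing these by their finite unions yields an increasing chain $\mathcal K_1 \subseteq \mathcal K_2 \subseteq \cdots$ of compacta with $\inf_n \nu_n(\mathcal K_j) \ge 1 - 1/j$. For the extraction I would embed $\mathcal X$ homeomorphically into a compact metric space $\widehat{\mathcal X}$ (for instance the Hilbert cube, which is possible precisely because $\mathcal X$ is separable metric) and regard each $\nu_n$ as a Borel probability measure on $\widehat{\mathcal X}$. Since $\widehat{\mathcal X}$ is compact, $C(\widehat{\mathcal X},\mathbb R)$ is separable and the $\nu_n$ sit in the unit ball of its dual; by the Banach--Alaoglu theorem together with separability (equivalently, a Helly-type diagonal selection), I can pass to a subsequence $\{\nu_{n_k}\}$ converging weak-$*$ to some positive functional, represented on the compact space $\widehat{\mathcal X}$ by a Borel probability measure $\widehat\nu$ via the Riesz representation theorem; here total mass $1$ is immediate because the constant function $1$ lies in $C(\widehat{\mathcal X},\mathbb R)$.

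Next I would recover the limit on $\mathcal X$ itself. Each $\mathcal K_j$ is compact in $\mathcal X$, hence closed in $\widehat{\mathcal X}$, so the Portmanteau inequality gives $\widehat\nu(\mathcal K_j) \ge \limsup_k \nu_{n_k}(\mathcal K_j) \ge 1 - 1/j$; letting $j \to \infty$ shows $\widehat\nu\big(\bigcup_j \mathcal K_j\big) = 1$, and since $\bigcup_j \mathcal K_j \subseteq \mathcal X$ the measure $\widehat\nu$ is concentrated on $\mathcal X$ and restricts to a $\nu \in \mathcal M_p(\mathcal X)$. It remains to verify weak convergence $\nu_{n_k} \to \nu$ in $\mathcal M_p(\mathcal X)$, which I would obtain through the Portmanteau theorem: for a closed $F \subseteq \mathcal X$ one controls $\nu_{n_k}(F)$ by intersecting with $\mathcal K_j$, bounding the tail $\nu_{n_k}(\mathcal X \setminus \mathcal K_j) \le 1/j$ by tightness and handling the part inside $\mathcal K_j$ by the convergence established on $\widehat{\mathcal X}$, then letting $j \to \infty$.

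The step I expect to be the genuine obstacle is this last certification: showing that $\nu(\mathcal X) = 1$ and that weak convergence holds against \emph{every} $f \in C_b(\mathcal X,\mathbb R)$ rather than only against functions confined to a compact set. As already flagged in the Comment of Section~2, on a non-compact Polish space the Riesz machinery does not transfer verbatim, since the constant function $1$ is not reachable through the compact pieces and a positive functional on $C_b(\mathcal X,\mathbb R)$ need not correspond to a measure; it is exactly the uniform tightness hypothesis that forbids the leakage of mass to infinity and upgrades the sub-probability limit to an element of $\mathcal M_p(\mathcal X)$. All remaining steps---the finite-union and diagonal bookkeeping and the Portmanteau estimates---are routine once this point is secured.
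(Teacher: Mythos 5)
The paper does not actually prove this statement: it is quoted verbatim as a classical result of Prokhorov, with only a citation to \cite{Prokhorov1956ConvergenceOR}, so there is no in-paper argument to compare yours against. Your proposal is a correct rendition of the standard compactification proof of the ``tightness implies relative sequential compactness'' direction: embed the separable metric space $\mathcal X$ into the Hilbert cube $\widehat{\mathcal X}$, push the measures forward, extract a weak-$*$ limit $\widehat\nu$ via Banach--Alaoglu and the separability of $C(\widehat{\mathcal X},\mathbb R)$ together with Riesz representation on the compact space, and then use the tightness sets $\mathcal K_j$ (which remain compact, hence closed, in $\widehat{\mathcal X}$) with the Portmanteau upper bound for closed sets to conclude $\widehat\nu\bigl(\bigcup_j \mathcal K_j\bigr)=1$, so that $\widehat\nu$ restricts to a genuine element of $\mathcal M_p(\mathcal X)$. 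You correctly identify the one point where care is needed --- that on a non-compact Polish space a positive functional need not be a measure, and that tightness is exactly what prevents mass from leaking to the corona $\widehat{\mathcal X}\setminus\mathcal X$ --- which is consistent with the Comment in Section~2 of the paper. The only detail worth making explicit in a full write-up is the one you already implicitly handle in the last step: a set $F$ closed in $\mathcal X$ need not be closed in $\widehat{\mathcal X}$, so the Portmanteau estimate must be run on the compact truncations $F\cap\mathcal K_j$, with the error $\nu_{n_k}(\mathcal X\setminus\mathcal K_j)\le 1/j$ controlled uniformly by tightness before letting $j\to\infty$. Note also that completeness of $\mathcal X$ is not needed for this direction (only separability and metrizability); it is the converse implication of Prokhorov's theorem that requires the Polish hypothesis.
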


Now, with this in mind we establish existence of invariant measures of our transitional probabilities and Markov process described in equation $(2), (3)$.
\begin{theorem}
Let for each $k, P_k$ has weak Feller property on a complete, separable metric space $(\mathcal X,\rho)$. If there exists $x\in \mathcal X$, for which the sequence of transitional probability measures $\{\nu_k^N(x,\cdot)\}_{N\ge 0}$ is uniformly tight, then there exists an invariant probability measure for $P_k^*$.
\label{thm52}
\end{theorem}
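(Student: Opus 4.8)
The plan is to use the classical Krylov–Bogolyubov (Bogolyubov–Krylov) averaging construction, adapted to the Polish-space setting where compactness is unavailable and replaced by the tightness hypothesis. The key idea is that, for each fixed $k$, the operator $P_k^*$ is a single Markov operator with the weak Feller property, so the entire problem reduces to the standard existence question for one Feller Markov operator on a Polish space; the time-varying index $k$ plays no role beyond selecting which $P_k^*$ we work with, and the argument is identical for every $k$. Throughout, ``weakly'' means convergence tested against $C_b(\mathcal X,\mathbb R)$.

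\textbf{The construction.} Fix $k$ and the point $x\in\mathcal X$ guaranteed by the hypothesis. I would form the Cesàro averages of the $N$-step transition probabilities,
\begin{align*}
\lambda_N(\cdot) := \frac{1}{N}\sum_{n=0}^{N-1} \nu_k^{\,n}(x,\cdot),
\end{align*}
each of which lies in $\mathcal M_p(\mathcal X)$. First I would observe that uniform tightness of the family $\{\nu_k^{\,n}(x,\cdot)\}_{n\ge 0}$ passes to the averaged family $\{\lambda_N\}_{N\ge 1}$: given $\epsilon>0$ and the compact set $\mathcal K$ with $\nu_k^{\,n}(x,\mathcal K)\ge 1-\epsilon$ for all $n$, convexity gives $\lambda_N(\mathcal K)\ge 1-\epsilon$ for all $N$, so $\{\lambda_N\}$ is uniformly tight as well. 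By Prokhorov's theorem (Theorem~3.2 in the excerpt) there is a subsequence $\lambda_{N_j}\to\nu_k^*$ weakly for some $\nu_k^*\in\mathcal M_p(\mathcal X)$.

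\textbf{Invariance via the Feller property.} It remains to show $P_k^*\nu_k^*=\nu_k^*$. I would test against an arbitrary $f\in C_b(\mathcal X,\mathbb R)$ and use the duality $\langle P_k f,\nu\rangle=\langle f,P_k^*\nu\rangle$ stated in the preliminaries. A direct computation with the telescoping sum gives
\begin{align*}
\langle f, P_k^*\lambda_N\rangle - \langle f,\lambda_N\rangle = \langle P_k f,\lambda_N\rangle-\langle f,\lambda_N\rangle = \frac{1}{N}\bigl(\langle f,\nu_k^{\,N}(x,\cdot)\rangle - \langle f,\nu_k^{\,0}(x,\cdot)\rangle\bigr),
\end{align*}
whose right-hand side is bounded in absolute value by $2\|f\|_\infty/N\to 0$. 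The crucial point, and what I expect to be the main obstacle, is justifying the passage to the limit along the subsequence: since $P_k$ is weak Feller, $P_k f\in C_b(\mathcal X,\mathbb R)$, so both $f$ and $P_k f$ are bounded continuous test functions, and weak convergence $\lambda_{N_j}\to\nu_k^*$ yields $\langle f,\lambda_{N_j}\rangle\to\langle f,\nu_k^*\rangle$ and $\langle P_k f,\lambda_{N_j}\rangle\to\langle P_k f,\nu_k^*\rangle=\langle f,P_k^*\nu_k^*\rangle$. Taking $j\to\infty$ in the telescoping identity therefore forces $\langle f,P_k^*\nu_k^*\rangle=\langle f,\nu_k^*\rangle$ for every $f\in C_b(\mathcal X,\mathbb R)$, and since bounded continuous functions separate probability measures on a Polish space, $P_k^*\nu_k^*=\nu_k^*$. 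The delicate issue worth flagging is that the Feller property is exactly what guarantees $P_k f$ is an admissible test function for the weak limit; without it the term $\langle P_k f,\lambda_{N_j}\rangle$ need not converge to the desired integral, and the argument would collapse.
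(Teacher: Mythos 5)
Your proposal is correct and follows essentially the same route as the paper's own proof: the Krylov--Bogolyubov Ces\`aro-average construction, tightness inherited by convexity, Prokhorov's theorem to extract a weak limit, the telescoping identity $\langle P_k f,\lambda_N\rangle-\langle f,\lambda_N\rangle=\tfrac{1}{N}\bigl(\langle f,\nu_k^{N}(x,\cdot)\rangle-\langle f,\nu_k^{0}(x,\cdot)\rangle\bigr)$, and the weak Feller property to make $P_k f$ an admissible test function in the limit. Your direct passage to the limit is in fact a cleaner presentation than the paper's $\epsilon$-and-triangle-inequality bookkeeping, but the underlying argument is identical.
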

\begin{proof}

Assume that there exists at least one $x\in \mathcal X$ for which the sequence $\{\nu_k^j(x, \cdot)\}_{j=0}^{\infty}$ is uniformly tight. Then we show that there exists at least one invariant probability measure for $P_k^*$. The proof is based on the Krylov-Bogoliubov \cite{Kryloff37} type argument. 
Define, a sequence of probability measures on $(\mathcal X, \mathcal B (\mathcal X))$ as follows: 
\begin{align*}
\text{ for } A\in \mathcal B (\mathcal X),\quad 
\nu^N_{k}(A)=\frac{1}{N}\sum\limits_{j=0}^{N-1}\nu_k^j (x,A), \text{ for some fixed } x\in \mathcal X. 
\end{align*}
It is clear that this sequence is also tight, so it has a sub-sequence that converges weekly to some probability measure $\nu_k^*$ on $\mathcal X$. We also have the following equality:
\begin{align}
P_k^*\nu^N_{k}- \nu^N_{k}= \frac{1}{N}\sum\limits_{j=2}^{N+1}\nu_k^j (x,A)-\frac{1}{N}\sum\limits_{j=1}^{N}\nu_k^j (x,A)=\frac{1}{N}[\nu_k^{N+1} (x,\cdot)-\nu_k^1 (x,\cdot) ]
\end{align}

Notice that for each fixed $x\in \mathcal X$, $\nu_k(x,A)$ is a probability measure and the integral of $f(x)$ with respect to such measure is expressed as $\int f(y)\nu_k(x,dy)$, and the interpretation holds for any $N\in \mathbb N$ and written as  $\int f(y)\nu_k^{N}(x,dy)$.
Take any $f\in C_b(\mathcal X, \mathbb R)$ such that $|f(x)|<1$. Fix an $\epsilon >0$. Weak convergence of the probability measures $\{\nu_k^N\}_{N\ge 1}$ ensures that there is a natural number $N>\frac{1}{\epsilon}$ for which
\begin{equation*}
\left |\int f(x) \nu_k^N(dx)-\int f(x) \nu_k^*(dx)\right|\le \epsilon.   
\end{equation*}
Since $(P_kf)$ is continuous, we can chose large $N$ for which 
\begin{equation*}
\left |\int (P_kf)(x) \nu_k^N(dx)-\int (P_kf)(x) \nu_k^*(dx)\right|\le \epsilon.   
\end{equation*}
Now, 
\begin{align*}
&\left|\int f(x) (P_k^*\nu_k^*)(dx)-\int f(x)\nu_k^*(dx)\right| \le \left |\int f(x) (P_k^*\nu_k^*)(dx)
-\int f(x)(P_k^*\nu_k^N)(dx)\right|\\
+&\left |\int f(x) (P_k^*\nu_k^N)(dx)-\int f(x)\nu_k^N(dx)\right|+ \left|\int f(x)\nu_k^N(dx)-\int f(x)\nu_k^*(dx)\right|\\
\le & \left|\int (P_kf)(x)\nu_k^*(dx)-\int (P_k f)(x)\nu_k^N(dx)\right|+ \frac{1}{N}\left |\int f(y)(\nu_k^{N+1}(x,dy)-\int f(y)\nu_k(x,dy)\right|+\epsilon\\
\le & 2\epsilon+ \frac{2}{N}\le 4\epsilon
\end{align*}
Since, the above relation is true for any arbitrary $\epsilon$, we can conclude
\begin{equation*}
\left|\int f(x) (P_k^*\nu_k^*)(dx)-\int f(x)\nu_k^*(dx)\right|=0
\end{equation*}
Also, the $f\in C_b(\mathcal X, \mathbb R)$ was arbitrary too, which forces $P_k^*\nu_k^*=\nu_k^*$.
\end{proof}

The following is a standard way of checking an arbitrary subset of probability measure is uniformly tight: 

\begin{theorem}[Bogachev \cite{bogachev-2007}]
An arbitrary $\mathcal M\subseteq \mathcal M_p(\mathcal X)$ is uniformly tight if there exists a Borel measurable function $g:\mathcal X\to \mathbb R^{+}$ such that for any $c<\infty$,\\
$(i)$ the sets $S_c=\{x: g(x)\le c\}$ are compact,\\
$(ii)$ $\nu(\{x: g(x)=\infty\})=0\forall \nu\in \mathcal M$, and\\
$(iii)$ $\sup\limits_{\nu\in \mathcal M} \int\limits_{X} g d\nu <\infty$. \\
\end{theorem}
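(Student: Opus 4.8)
The plan is to obtain uniform tightness directly from a Markov (Chebyshev) inequality applied to $g$, treating $g$ as a tightness function whose compact sublevel sets $S_c$ serve as the compact witnesses $\mathcal K$ demanded by the definition of uniform tightness. The guiding intuition is that hypothesis (iii) forces every $\nu \in \mathcal M$ to have the same uniform ceiling on the average of $g$, so no measure in the family can place appreciable mass on the region where $g$ is large, while hypothesis (i) tells us the complementary region where $g$ is small is compact. Thus the single compact set $S_c$ will work simultaneously for all $\nu \in \mathcal M$ once $c$ is chosen large enough.

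Concretely, I would first fix an arbitrary $\epsilon > 0$ and set $M := \sup_{\nu \in \mathcal M} \int_{\mathcal X} g\, d\nu$, which is a finite number by (iii). Next I would choose the threshold $c := M/\epsilon$ (any strictly larger value works equally well) and take $\mathcal K := S_c = \{x : g(x) \le c\}$, which is compact by (i). Then for each $\nu \in \mathcal M$ the complement $\mathcal X \setminus \mathcal K = \{x : g(x) > c\}$ is controlled by Markov's inequality,
\[
\nu(\{x : g(x) > c\}) \le \frac{1}{c}\int_{\{g > c\}} g\, d\nu \le \frac{1}{c}\int_{\mathcal X} g\, d\nu \le \frac{M}{c} = \epsilon,
\]
so that $\nu(\mathcal K) = \nu(S_c) \ge 1 - \epsilon$ for every $\nu \in \mathcal M$, with $\mathcal K$ chosen independently of $\nu$. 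Since $\epsilon > 0$ was arbitrary, this is precisely the definition of uniform tightness of $\mathcal M$.

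The only point requiring genuine care — and the nearest thing to an obstacle — is the measure-theoretic bookkeeping around the set where $g$ takes the value $\infty$. One must ensure the integral in (iii) is truly finite and that any mass potentially concentrated at infinity is still accounted for inside the complement $\{g > c\}$, rather than escaping the estimate. This is exactly what hypothesis (ii) supplies: since $\{g = \infty\} \subseteq \{g > c\}$ carries zero mass under each $\nu$, the Markov bound above controls the whole of $\mathcal X \setminus \mathcal K$. Everything else is routine; in particular, no Prohorov-type compactness extraction is needed here, because the sublevel-set hypothesis hands us the compact sets $S_c$ directly.
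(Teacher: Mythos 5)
Your proof is correct. The paper itself states this result without proof, citing Bogachev, so there is no internal argument to compare against; your Markov--Chebyshev estimate $\nu(\{g>c\})\le \frac{1}{c}\int_{\mathcal X} g\,d\nu \le M/c$ with $c=M/\epsilon$ and $\mathcal K=S_c$ is exactly the standard proof of the cited result, and your remark that hypothesis (ii) only serves to keep the mass at $\{g=\infty\}$ inside the controlled set $\{g>c\}$ (and is in fact already implied by (iii) for measures in $\mathcal M$) is accurate.
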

The proof of the above theorem precisely shows existence of an Lyapunov function for the system which evolves on Polish space which is not necessarily locally compact. Let us now briefly introduce Lyapunov function for transitional probability on a Polish space.

\begin{definition}
A Borel measurable function $g: \mathcal X\to \mathbb R^{+}$ is called a Lyapunov function for a transitional probability $\nu(x,A)$ if the following conditions are satisfied:\\
$(i)$ $ g^{-1}(\mathbb R^{+})\ne \phi$\\
$(ii)$ the sets $S_c=\{x: g(x)\le c\}$ are compact,\\
$(iii)$ There exists $0\le c_1<1$ and $c_2\in \mathbb R$ such that $\int\limits_{X} g(y) d\nu(x, dy) <\infty$. \\
\end{definition}

This allows one to check that if a transitional probability is Feller and admits a Lyapunov function, then it also has an invariant probability measure $\cite{Hairer2018}$.

\subsection{An Estimate of the Distance between two Invariant Measures}
The state at time $k$ is a random variable, so we will work with  measures $\nu_k^t \in \mathcal{M}_p( \mathcal X )$, which we obtain by sampling from the set $\mathcal{F}$ according to $\mu_k$ and applying the sampled function, both repeated $t$ times to what had been obtained previously (with $\mu_{k-1}, \mu_{k-2}$). In particular, how far is the measure $\nu_k^t$ obtained after $t$ iterated function applications from the piecewise-invariant measure $\nu_k^*$, which we would reach eventually.
A metric $d$ on $\mathcal M_p(\mathcal X)$ defined by 
\begin{align*}
d(\nu_1, \nu_2)= \|\nu_1-\nu_2\|_{\text{TV}}=\sup_{A \in \mathcal B(\mathcal X)} \{|\nu_1(A)- \nu_2(A)|\}=\frac12\sup_f\{\left|\int f\:{\rm d}(\nu-\mu)\right|:  f:\mathcal X\to \mathbb R \textrm{ is measurable}, \|f\|\le 1\}
\end{align*}
(i.e., the total-variation distance between $\nu_1, \nu_2$), makes it a complete metric space when $\mathcal X$ is complete $\cite{bogachev-2007}$. It is standard to show convergence to invariant measure in total variance distance in general theory of Markov chain, but in our situation, the initial distribution can be chosen in way, such that the process does not converge at all. If the process evolves on some compact subset of $\mathbb R^n$, one could choose the initial distribution to be singular with regard to Lebesgue measure if the invariant measure is absolutely continuous and vice-versa. By using a weaker metric such as Kantorovich metric, one can avoid the problem. Convergence in Kantorovich–Rubinstein metric is equivalent to weak convergence. Let us now define dual version of Wasserstein-type metric (also known as Kantorovich–Rubinstein metric) $d$ on $\mathcal M_p(\mathcal X)$ as follows, for any two $\nu_1, \nu_2$: 
\begin{align}
d(\nu_1, \nu_2)=\sup\limits_{f\in \mathcal F_{\alpha}}\left[\int\limits_{\mathcal X} fd\nu_1-\int\limits_{\mathcal X} f d\nu_2\right]= \sup\limits_{f\in \mathcal F_{\alpha}}\int\limits_{\mathcal X} fd(\nu_1-\nu_2), 
\end{align}
where for $\alpha\in (0,1]$ and
\begin{align}
\mathcal F_{\alpha}=\{f : \mathcal X\to \mathbb R: |f(x)-f(y)|\le d^{\alpha}(x,y)\}.
\end{align}

\begin{lemma}
If for some $r\in (0,1), \sum\limits_{j=1}^{m}\mu_k(j) |(f\circ f_{\sigma_{k}^{j}})(x)-(f\circ f_{\sigma_{k}^{j}})(y)|^{\alpha}\le r d^{\alpha}(x,y)$, then $\frac{1}{r} (P_kf)\in \mathcal F_{\alpha}\forall f\in \mathcal F_{\alpha}$, and $P_k^*$ are strictly contractive i.e., $\forall\hspace{0.2cm} \nu_1, \nu_2\in \mathcal M_p(\mathcal X)$ 
\begin{align}
d(P_k^*\nu_1, P_k^* \nu_2)\le r d(\nu_1, \nu_2)
\end{align}
\end{lemma}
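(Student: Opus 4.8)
The plan is to prove the two assertions in sequence, treating the function-side statement ($\tfrac1r P_k f \in \mathcal F_\alpha$) as a lemma that feeds, via the duality relation $\langle P_k f,\nu\rangle=\langle f,P_k^*\nu\rangle$, into the measure-side contraction estimate. The whole argument is essentially a transfer principle: a bound on how $P_k$ shrinks the H\"older seminorm of a test function becomes, by passing to adjoints, a bound on how $P_k^*$ shrinks the Kantorovich--Rubinstein distance between measures.

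First I would establish $\tfrac1r P_k f\in\mathcal F_\alpha$ for every $f\in\mathcal F_\alpha$. Since $P_k$ is linear and $\mu_k$ is a probability vector, for any $x,y\in\mathcal X$ the triangle inequality gives
\[
|P_kf(x)-P_kf(y)|=\Big|\sum_{j=1}^m\mu_k(j)\big[(f\circ f_{\sigma_k^j})(x)-(f\circ f_{\sigma_k^j})(y)\big]\Big|\le\sum_{j=1}^m\mu_k(j)\,\big|(f\circ f_{\sigma_k^j})(x)-(f\circ f_{\sigma_k^j})(y)\big|.
\]
The hypothesis bounds the right-hand side by $r\,d^\alpha(x,y)$, so $|\tfrac1r P_kf(x)-\tfrac1r P_kf(y)|\le d^\alpha(x,y)$, which is exactly the defining condition for membership in $\mathcal F_\alpha$. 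I would also record that $\mathcal F_\alpha$ is symmetric under $f\mapsto-f$, so that the signed supremum defining $d$ coincides with its absolute-value version.

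Next I would transfer this to the measures. Starting from the definition of the metric and inserting the duality relation,
\[
d(P_k^*\nu_1,P_k^*\nu_2)=\sup_{f\in\mathcal F_\alpha}\int_{\mathcal X}f\,d(P_k^*\nu_1-P_k^*\nu_2)=\sup_{f\in\mathcal F_\alpha}\int_{\mathcal X}(P_kf)\,d(\nu_1-\nu_2).
\]
Writing $P_kf=r\cdot(\tfrac1r P_kf)$ and pulling the constant out of the integral and the supremum, the last expression equals $r\sup_{f\in\mathcal F_\alpha}\int_{\mathcal X}(\tfrac1r P_kf)\,d(\nu_1-\nu_2)$. By the first step the function $\tfrac1r P_kf$ ranges inside $\mathcal F_\alpha$ as $f$ ranges over $\mathcal F_\alpha$, so this supremum over the image set is dominated by the supremum over the whole class $\mathcal F_\alpha$, which is $d(\nu_1,\nu_2)$. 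Hence $d(P_k^*\nu_1,P_k^*\nu_2)\le r\,d(\nu_1,\nu_2)$.

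The step I expect to require the most care is the set-containment argument $\{\tfrac1r P_kf:f\in\mathcal F_\alpha\}\subseteq\mathcal F_\alpha$ together with the legitimacy of applying $\langle P_kf,\nu\rangle=\langle f,P_k^*\nu\rangle$ to test functions that are only H\"older (hence possibly unbounded on a non-compact Polish $\mathcal X$) rather than bounded continuous. I would address the latter by noting that adding a constant leaves $\int f\,d(\nu_1-\nu_2)$ unchanged, because $\nu_1,\nu_2$ are probability measures, so one may normalise each competitor $f$ at a fixed basepoint and argue with the resulting centred functions, confirming integrability through the Lyapunov/tightness structure already in force. The remaining manipulations --- linearity, the triangle inequality, and pulling scalars through the supremum --- are routine.
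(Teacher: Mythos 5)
Your proposal is correct and follows essentially the same route as the paper's own proof: first show via the triangle inequality and the hypothesis that $\tfrac{1}{r}P_k f \in \mathcal F_{\alpha}$, then pass to the adjoint, pull out the factor $r$, and dominate the supremum over the image set $\{\tfrac{1}{r}P_k f\}$ by the supremum over all of $\mathcal F_{\alpha}$. Your additional remarks on normalising test functions at a basepoint and on integrability over a non-compact Polish space address genuine technical points that the paper's proof passes over silently, but they do not change the argument's structure.
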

\begin{proof}
\begin{align*}
\left |\frac{1}{r}(P_kf)(x)-\frac{1}{r}(P_k)f(y)\right|&=\left|\frac{1}{r}\sum\limits_{j=1}^{m} \mu_k(j)(f\circ f_{\sigma_{k}^{j}})(x) -\sum\limits_{j=1}^{m} \mu_k(j)(f\circ f_{\sigma_{k}^{j}})(y)\right|\\
&\le \frac{1}{r}\sum\limits_{j=1}^{m} \mu_k(j)\left |(f\circ f_{\sigma_{k}^{j}})(x)-(f\circ f_{\sigma_{k}^{j}})(y)\right|\\
&\le d^{\alpha}(x,y)
\Rightarrow \frac{1}{r}(P_kf)\in \mathcal F_{\alpha}.
\end{align*}

Now,
\begin{align*}
d(P_k^*\nu_1, P_k^* \nu_2)&=\sup\limits_{f\in \mathcal F_{\alpha}}\left [ \int f d(P_k^*\nu_1)- \int fd(P_k^*\nu_2)\right ]\\
&=\sup\limits_{f\in \mathcal F_{\alpha}}\left [ \int (P_kf)d\nu_1- \int (P_kf)d\nu_2\right ]\\
&=\sup\limits_{f\in \mathcal F_{\alpha}}\left [ \int (P_kf)d(\nu_1-\nu_2)\right ]\\
&=r\cdot\sup\limits_{f\in \mathcal F_{\alpha}}\left [ \int \left(\frac{1}{r}P_kf\right)d(\nu_1-\nu_2)\right ]\\
&= r\cdot\sup\limits_{g\in \mathcal F_{\alpha}} \int gd(\nu_1-\nu_2)\left [\because g=\frac{1}{r}P_kf\in \mathcal F_{\alpha}\right ]\\
&\le r d(\nu_1, \nu_2).
\end{align*}
\end{proof}

\begin{lemma}
\label{lem55}
$P_k^*$ is a contractive Markov operator in Wasserstein type metric if the expected modulus of Lipschitz continuity with respect to $\mu_k$ is less than $r$, i.e., $\sum\limits_{j=1}^{m} \mu_k(j)f_{\sigma_k^j}'(x)\le r<1$ almost everywhere.
\end{lemma}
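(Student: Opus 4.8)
The plan is to reduce this statement to the preceding lemma by showing that the pointwise (differential) hypothesis stated here implies the integrated Hölder-type hypothesis required there, specialised to $\alpha = 1$. Throughout I would treat the relevant portion of the state space as a subset of $\mathbb{R}$, so that the derivatives $f_{\sigma_k^j}'$ are meaningful; since each $f_{\sigma_k^j}$ is Lipschitz it is absolutely continuous, and the fundamental theorem of calculus applies. Recall that for $\alpha = 1$ the class $\mathcal F_\alpha$ consists of the $1$-Lipschitz functions, and the preceding lemma asks for a constant $r \in (0,1)$ with $\sum_{j=1}^m \mu_k(j)\,|(f\circ f_{\sigma_k^j})(x)-(f\circ f_{\sigma_k^j})(y)| \le r\,d(x,y)$ for every $f \in \mathcal F_1$. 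My target is exactly this inequality.

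First I would fix $f \in \mathcal F_1$ and points $x,y$ with, say, $y < x$, and rewrite each composition difference as an integral of a derivative. By the chain rule for absolutely continuous maps, $(f\circ f_{\sigma_k^j})(x)-(f\circ f_{\sigma_k^j})(y)=\int_y^x f'(f_{\sigma_k^j}(s))\,f_{\sigma_k^j}'(s)\,\mathrm{d}s$, so taking absolute values and using $|f'|\le 1$ a.e. (which is what $f\in\mathcal F_1$ supplies) yields $|(f\circ f_{\sigma_k^j})(x)-(f\circ f_{\sigma_k^j})(y)|\le \int_y^x |f_{\sigma_k^j}'(s)|\,\mathrm{d}s$. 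Next I would multiply by $\mu_k(j)$, sum over the finite index set, and interchange the sum with the integral to get $\sum_{j}\mu_k(j)\,|(f\circ f_{\sigma_k^j})(x)-(f\circ f_{\sigma_k^j})(y)|\le \int_y^x \sum_j \mu_k(j)\,|f_{\sigma_k^j}'(s)|\,\mathrm{d}s$. Feeding in the hypothesis $\sum_j \mu_k(j)\,f_{\sigma_k^j}'(s)\le r$ a.e. bounds the integrand by $r$, hence the right-hand side by $r\,(x-y)=r\,d(x,y)$, which is precisely the hypothesis of the preceding lemma.

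With that inequality in hand I would simply invoke the preceding lemma: it delivers $\tfrac1r P_k f \in \mathcal F_1$ and, through the displayed chain of equalities in its proof, the contraction estimate $d(P_k^*\nu_1,P_k^*\nu_2)\le r\,d(\nu_1,\nu_2)$ for all $\nu_1,\nu_2\in\mathcal M_p(\mathcal X)$. Since $r<1$, $P_k^*$ is a strict contraction in the Wasserstein-type metric, as claimed; combined with completeness of $(\mathcal M_p(\mathcal X),d)$ this also re-derives the existence and uniqueness of the invariant measure $\nu_k^*$ via the Banach fixed-point theorem.

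The main obstacle is the discrepancy between the signed quantity in the hypothesis, $\sum_j \mu_k(j)\,f_{\sigma_k^j}'(s)$, and the unsigned quantity $\sum_j \mu_k(j)\,|f_{\sigma_k^j}'(s)|$ that the integral bound actually produces. These coincide only when each $f_{\sigma_k^j}$ is (weakly) monotone increasing, so that $f_{\sigma_k^j}'\ge 0$ a.e.; without such a sign assumption the hypothesis as written can hold while individual maps are strongly expansive, so the reduction would fail. I would therefore either adopt the standing assumption that the maps are monotone increasing (natural in the place-dependent-probability setting) or restate the hypothesis with $|f_{\sigma_k^j}'|$ in place of $f_{\sigma_k^j}'$. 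The remaining points—restricting to a one-dimensional state space so the derivative is meaningful, and justifying the chain rule for a composition of absolutely continuous functions—are routine once this sign issue is resolved.
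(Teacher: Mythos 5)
Your proposal is correct and lands on the same underlying mechanism as the paper, but it is organised differently. The paper's proof does not route through the preceding lemma at all: it works directly with the test class $\mathcal G$ of absolutely continuous functions with $|f'|\le 1$ a.e., writes $P_k f = r\,\mathcal J_f$ with $\mathcal J_f(x)=\tfrac1r\sum_j\mu_k(j)(f\circ f_{\sigma_k^j})(x)$, and checks membership $\mathcal J_f\in\mathcal G$ by differentiating: $|\mathcal J_f'(x)|\le \tfrac1r\sum_j\mu_k(j)\,|f'(f_{\sigma_k^j}(x))|\,|f_{\sigma_k^j}'(x)|\le 1$ a.e. You instead integrate the same chain-rule bound over $[y,x]$ to recover the finite-difference hypothesis of the preceding lemma with $\alpha=1$ and then cite that lemma. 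The two are the differential and integrated forms of one and the same estimate (and $\mathcal G$ coincides with $\mathcal F_1$ on the real line), so nothing essential changes; what your version buys is that it makes explicit the logical dependence between the two lemmas, which the paper leaves implicit by silently switching the test class from $\mathcal F_\alpha$ to $\mathcal G$.

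The sign discrepancy you flag is a genuine issue, and you should note that it is present in the paper's own proof as well, not just in your reduction: the paper's chain of inequalities needs $\sum_j\mu_k(j)\,|f_{\sigma_k^j}'(x)|\le r$ to conclude $|\mathcal J_f'(x)|\le 1$, whereas the stated hypothesis only controls the signed sum $\sum_j\mu_k(j)\,f_{\sigma_k^j}'(x)$. Your proposed fixes (assume each $f_{\sigma_k^j}$ is monotone increasing, or restate the hypothesis with $|f_{\sigma_k^j}'|$) would repair both arguments; the latter is the cleaner choice and is clearly what the statement intends by ``expected modulus of Lipschitz continuity.'' Your closing remarks about the one-dimensional restriction and the chain rule for compositions of absolutely continuous maps are also apt: the paper tacitly makes the same restriction by invoking $f'$ and $\mathcal J_f'$ on a general Polish space without comment.
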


\begin{proof}
Call a Markov operator $P$ contractive with contraction factor $r\in (0,1)$, and non-expansive when $r=1$, if $d(P\nu_1, P\nu_2)\le r d(\nu_1, \nu_2)\quad \forall \nu_1,\nu_2\in \mathcal M_p(\mathcal X)$. Let us first find for any $i,j$, the following estimates of the distance between any two arbitrary measures:

\begin{align}
d(P_k^*\nu_k^i,P_k^*\nu_{k}^j)
=& \sup\limits_{f\in \mathcal G}\left[\int f d(P_k^*\nu_k^i)-\int f d(P_k^*\nu_k^j)\right]\nonumber\\
=& \sup\limits_{f\in \mathcal G}\int (P_k f)d(\nu_k^i-\nu_k^j)\nonumber\\
=&\sup\limits_{f\in \mathcal G}\int \left (\sum\limits_{j=1}^{m} \mu_k(j)(f\circ f_{\sigma_{k}^{j}})(x) \right )d(\nu_k^i-\nu_k^j)\nonumber\\
=&r\sup\limits_{f\in \mathcal G}\int \mathcal{J}_{f}(x) d(\nu_k^i-\nu_k^j),
\end{align}
where $\mathcal{J}_{f}(x)=\frac{1}{r}\sum\limits_{j=1}^{m} \mu_k(j)(f\circ f_{\sigma_{k}^{j}})(x)\Rightarrow |\mathcal{J}_{f}'(x)|\le \frac{1}{r}\sum\limits_{j=1}^{m} \mu_k(j)|f'(f_{\sigma_{k}^{j}}(x))| |f_{\sigma_{k}^{j}}'(x)|\le 1\text{ almost everywhere}$. Thus,  $\mathcal J_f\in \mathcal G$ and $(11)$ becomes
\begin{align}
d(P_k^*\nu_k^i,P_k^*\nu_{k}^j)\le r d(\nu_k^i,\nu_{k}^j).
\end{align}
\end{proof}

Using Lemma \ref{lem55}, an estimate of the distance between an arbitrary measure and the invariant measure, can be obtained as follows:
\begin{align}
d(\nu, \nu^*)&\le d(\nu, P_k^*\nu)+d(P_k^*\nu, \nu^*)\nonumber\\
&=d(\nu, P_k^*\nu)+d(P_k^*\nu, P_k^*\nu^*)\nonumber\\
&\le d(\nu, P_k^*\nu)+rd(\nu, \nu^*) \nonumber\\
&\Rightarrow d(\nu, \nu^*)\le \frac{1}{1-r} d(\nu, P_k^* \nu) 
\end{align}

When the process reaches an invariant measure associated with $\mu_k$, switches to $\mu_{k+1}$, and reaches another invariant measure, it is natural to ask how far the two invariant measures $\nu_k^*$ and $\nu_{k+1}^*$ are. We have an estimate: 
\begin{theorem}
For all $k$, and for some $r\in (0,1)$, there is a bound on the distance of two subsequent invariant measures $\nu_k^*, \nu_{k+1}^*$:
\begin{align}
d(\nu_k^*, \nu_{k+1}^*)&\le \frac{1}{1-r}\left (M\sum\limits_{j}\Vert f_{\sigma_k^j}(x)-f_{\sigma_{k+1}^j}(x)\Vert_{\infty}+Be\right )
\end{align}
where $B$ is a bound for the functions $f\in \mathcal F_{\alpha}$, $e$ is the bound on the rate of change in Assumption \ref{as:varying}, and $M$ is a bound on the derivatives of $f\in \mathcal F_{\alpha}$.
\label{thm56}
\end{theorem}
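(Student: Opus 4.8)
The plan is to convert the comparison of the two distinct fixed points $\nu_k^*$ and $\nu_{k+1}^*$ into a comparison of the two operators $P_k^*$ and $P_{k+1}^*$ evaluated at a single common measure, and then to split that operator difference into a part driven by the change of the maps and a part driven by the change of the sampling measure. The engine is the a priori estimate $d(\nu,\nu^*)\le \frac{1}{1-r}\,d(\nu,P_k^*\nu)$ established above from Lemma \ref{lem55}, which holds for every $\nu\in\mathcal M_p(\mathcal X)$ with $\nu^*=\nu_k^*$ the unique $P_k^*$-invariant measure.

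First I would instantiate that estimate with $\nu=\nu_{k+1}^*$ and $\nu^*=\nu_k^*$, using that $P_k^*$ is an $r$-contraction with fixed point $\nu_k^*$. Since $d$ is symmetric (the class $\mathcal F_\alpha$ is closed under $f\mapsto -f$), this yields
\begin{align*}
d(\nu_k^*, \nu_{k+1}^*)\le \frac{1}{1-r}\,d(\nu_{k+1}^*, P_k^*\nu_{k+1}^*).
\end{align*}
The key move is then to exploit the invariance $\nu_{k+1}^*=P_{k+1}^*\nu_{k+1}^*$ to rewrite the right-hand factor as $d(P_{k+1}^*\nu_{k+1}^*, P_k^*\nu_{k+1}^*)$. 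This is the standard perturbation-of-fixed-point step: it replaces a two-fixed-point problem by the difference of two operators acting on the common measure $\nu:=\nu_{k+1}^*$.

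Next I would estimate $d(P_{k+1}^*\nu, P_k^*\nu)$ through the Kantorovich--Rubinstein dual definition of $d$, writing it as $\sup_{f\in\mathcal F_\alpha}\int(P_{k+1}f-P_kf)\,d\nu$ and inserting the explicit form of $P_k$. The computational heart is the add-and-subtract decomposition
\begin{align*}
P_{k+1}f-P_kf=\sum_{j}\mu_{k+1}(j)\big[(f\circ f_{\sigma_{k+1}^j})-(f\circ f_{\sigma_k^j})\big]+\sum_{j}\big(\mu_{k+1}(j)-\mu_k(j)\big)(f\circ f_{\sigma_k^j}),
\end{align*}
which cleanly isolates the two sources of drift. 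In the first sum I would apply the Lipschitz/derivative bound $M$ on $f\in\mathcal F_\alpha$ to get $|f(f_{\sigma_{k+1}^j}(x))-f(f_{\sigma_k^j}(x))|\le M\,\|f_{\sigma_k^j}-f_{\sigma_{k+1}^j}\|_\infty$, and the weights collapse via $\sum_j\mu_{k+1}(j)\le 1$, producing the $M\sum_j\|f_{\sigma_k^j}(x)-f_{\sigma_{k+1}^j}(x)\|_\infty$ term. In the second sum I would bound $|f\circ f_{\sigma_k^j}|$ by $B$ and recognise $\sum_j|\mu_{k+1}(j)-\mu_k(j)|=\|\mu_k-\mu_{k+1}\|_{TV}\le e$ from Assumption \ref{as:varying}, producing the $Be$ term. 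Since both bounds are uniform in $x$ and in $f$, integrating against $\nu$ and taking the supremum over $\mathcal F_\alpha$ preserves them, and substituting back into the displayed inequality gives the claim.

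I expect the main difficulty to be bookkeeping rather than conceptual, namely making the two constants genuinely uniform over $\mathcal F_\alpha$. The $Be$ term needs a uniform sup-norm bound $B$ on the composed test functions, which for a class of merely H\"older functions requires either restricting to bounded $f$ or a truncation argument; and the first term as stated is linear in $\|f_{\sigma_k^j}-f_{\sigma_{k+1}^j}\|_\infty$, which matches a Lipschitz estimate ($\alpha=1$) but would in general only yield a H\"older factor $d^\alpha$, so one must read $M$ as the relevant Lipschitz/derivative constant of the class. I would also note that the first sum vanishes if the family $\mathcal F$ is held fixed across phases, leaving only the $Be$ contribution; thus the theorem as written is the more general version in which both the sampling measure and the effective maps may drift between $k$ and $k+1$.
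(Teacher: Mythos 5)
Your proposal is correct and follows essentially the same route as the paper: the fixed-point perturbation step $d(\nu_k^*,\nu_{k+1}^*)\le \frac{1}{1-r}\,d(P_k^*\nu_{k+1}^*,P_{k+1}^*\nu_{k+1}^*)$ via the contraction of $P_k^*$, followed by the same add-and-subtract decomposition of $P_{k+1}f-P_kf$ (with the roles of the two terms merely swapped) and the same $M$-Lipschitz and total-variation bounds. Your closing remarks on the uniformity of $B$ and $M$ over $\mathcal F_\alpha$ flag a real imprecision in the theorem's hypotheses that the paper glosses over, but they do not change the argument.
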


\begin{proof}
Consider $P_k^*$ be the Markov operator of the IFS when $\mu_k$ is active with invariant measure $\nu_k^*$ and $P_{k+1}^*$ be the Markov operator of the IFS when $\mu_{k+1}$ is active with invariant measure $\nu_{k+1}^*$.
Let us first notice that
\begin{align}
d(\nu_k^*, \nu_{k+1}^*)=&d(P_k^*\nu_k^*, P_{k+1}^*\nu_{k+1}^*)\nonumber\\
\le & d(P_k^*\nu_k^*, P_k^*\nu_{k+1}^*)+d(P_k^*\nu_{k+1}^*, P_{k+1}^*\nu_{k+1}^*)\nonumber\\
\le & rd(\nu_k^*,\nu_{k+1}^*)+d(P_{k}\nu_{k+1}^*, P_{k+1}\nu_{k+1}^*) \quad[\because (10)]\nonumber\\
\Rightarrow d(\nu_k^*, \nu_{k+1}^*)&\le \frac{d(P_{k}\nu_{k+1}^*, P_{k+1}\nu_{k+1}^*)}{1-r}
\end{align}

Now, secondly, notice that:
\begin{align}
\left \|P_kf(x)-P_{k+1}f(x) \right\|=&\|\sum\limits_{j}\mu_k(j)(f\circ f_{\sigma_k^j})(x)-\sum\limits_{j}\mu_{k+1}(j)(f\circ f_{\sigma_{k+1}^j})(x)\|\nonumber\\
=&\| \sum\limits_{j}[\mu_k(j)-\mu_{k+1}(j)](f\circ f_{\sigma_{k+1}^j})(x)+ \sum\limits_{j}\mu_k(j)[(f\circ f_{\sigma_k^j})(x)-(f\circ f_{\sigma_{k+1}^j})(x)]\|\nonumber\\
\le& \sum\limits_{j}|\mu_k(j)-\mu_{k+1}(j)|\Vert(f\circ f_{\sigma_{k+1}^j})(x)\Vert+ M\sum\limits_{j}\Vert f_{\sigma_k^j}(x)- f_{\sigma_{k+1}^j}(x)\Vert\nonumber\\
\le&B \sum\limits_{j}|\mu_k(j)-\mu_{k+1}(j)|+ M\sum\limits_{j}\Vert f_{\sigma_k^j}(x)-f_{\sigma_{k+1}^j}(x)\Vert\nonumber\\
\le&B \|\mu_k-\mu_{k+1}\|_{\text{TV}}+ M\sum\limits_{j}\Vert f_{\sigma_k^j}(x)-f_{\sigma_{k+1}^j}(x)\Vert\nonumber\\
\le& Be+ M\sum\limits_{j}\Vert f_{\sigma_k^j}(x)- f_{\sigma_{k+1}^j}(x)\Vert_{\infty}\quad [ \because (4)]
\end{align}
And, finally
\begin{align}
d(P_{k}\nu_{k+1}^*, P_{k+1}\nu_{k+1}^*)&= \sup\limits_{f}\int f d(P_k^*\nu_{k+1}^*-P_{k+1}^*\nu_{k+1}^*)= \sup\limits_{f}\int (P_kf-P_{k+1}f)d(\nu_{k+1}^*)\nonumber\\
&\le M\sum\limits_{j}\Vert f_{\sigma_k^j}(x)-f_{\sigma_{k+1}^j}(x)\Vert_{\infty}+Be.\nonumber\\
\Rightarrow d(\nu_k^*, \nu_{k+1}^*)&\le \frac{1}{1-r}\left (M\sum\limits_{j}\Vert f_{\sigma_k^j}(x)-f_{\sigma_{k+1}^j}(x)\Vert_{\infty}+Be\right )
\end{align}
\end{proof}

\subsection{A Tracking Error and a Regret Bound}

With the technical results of the previous subsection, we can start studying the distance of the measure $\nu_k^1$ after $k$ changes in the measure $\mu_k$
 to the invariant measure $\nu_k^*$ after $k$ changes in the measure $\mu_k$,
 which we call the tracking error.

\begin{theorem}[Tracking error in the on-line setting]
\label{thm:error}
Let Assumptions~\ref{as:varying} hold. 
For all $k$, and 
for some $r\in (0,1)$,
we have convergence to $\nu_k^* \in\mathcal{M}_p(\mathcal X)$ in the sense that 
\begin{equation}\label{eq.asympt_error}
d(\nu_k^1, \nu_k^*) \le \left [\frac{r}{(1-r)^2}\right ] d(\nu_k^{1}, \nu_k^{0}).
\end{equation}
\end{theorem}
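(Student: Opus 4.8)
The plan is to recognise that, within a single epoch $k$, the measure $\mu_k$ is frozen, so the one-step update is always the \emph{same} Markov operator $P_k^*$, and the generated sequence $\nu_k^0, \nu_k^1, \nu_k^2, \dots$ is exactly the orbit $\nu_k^{i} = (P_k^*)^{i}\nu_k^0$ of this single operator. By Lemma~\ref{lem55} (equivalently the strict-contraction estimate~(10)) the operator $P_k^*$ is a contraction on $(\mathcal M_p(\mathcal X), d)$ with factor $r\in(0,1)$, and by Theorem~\ref{thm52} it admits a fixed point $\nu_k^*$ with $P_k^*\nu_k^*=\nu_k^*$. Thus the whole statement is a quantitative, Banach-fixed-point-type rate-of-convergence estimate for a contraction in the Wasserstein-type metric $d$, and the only real content is to convert the one-step displacement $d(\nu_k^1,\nu_k^0)$ into a bound on the residual distance $d(\nu_k^1,\nu_k^*)$.

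For the core estimate I would use the a-posteriori inequality already derived just before the statement, namely $d(\nu,\nu^*)\le \tfrac{1}{1-r}\,d(\nu,P_k^*\nu)$ (inequality~(13)), applied at $\nu=\nu_k^1$. Since $P_k^*\nu_k^1=\nu_k^2$, this gives $d(\nu_k^1,\nu_k^*)\le \tfrac{1}{1-r}\,d(\nu_k^1,\nu_k^2)$. I would then contract once more: because $\nu_k^1=P_k^*\nu_k^0$ and $\nu_k^2=P_k^*\nu_k^1$, the contraction property yields $d(\nu_k^1,\nu_k^2)=d(P_k^*\nu_k^0,P_k^*\nu_k^1)\le r\,d(\nu_k^0,\nu_k^1)$. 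Combining the two gives $d(\nu_k^1,\nu_k^*)\le \tfrac{r}{1-r}\,d(\nu_k^1,\nu_k^0)$; since $r\in(0,1)$ we have $\tfrac{1}{1-r}>1$, so this is in turn $\le \tfrac{r}{(1-r)^2}\,d(\nu_k^1,\nu_k^0)$, which is exactly~\eqref{eq.asympt_error}. Equivalently, I could telescope, $d(\nu_k^1,\nu_k^*)\le \sum_{n\ge 1} d(\nu_k^n,\nu_k^{n+1})\le \sum_{n\ge 1} r^{\,n-1} d(\nu_k^1,\nu_k^2)=\tfrac{1}{1-r}d(\nu_k^1,\nu_k^2)$, and finish as above.

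The main obstacle is not the algebra but verifying that the contraction framework genuinely applies to this orbit. Concretely, I must ensure that each iterate $\nu_k^i$ lies in $\mathcal M_p(\mathcal X)$ so that Lemma~\ref{lem55} is applicable at every step (this holds because $P_k^*$ is Markov and preserves total mass), and that $\nu_k^*$ is truly the $d$-limit of the orbit and not merely some fixed point; this is where the attractivity forced by strict contraction, together with the fact that $d$ metrises weak convergence, is needed to make the telescoping legitimate and its sum equal to $d(\nu_k^1,\nu_k^*)$. I would also note that Assumption~\ref{as:varying} plays no role inside a single epoch, where $\mu_k$ is fixed; it becomes relevant only across epochs and in the subsequent regret analysis. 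Finally, I would flag that the clean argument in fact delivers the sharper constant $\tfrac{r}{1-r}$, and that the stated $\tfrac{r}{(1-r)^2}$ is a deliberately loose form, convenient for summation when the per-epoch tracking errors are later aggregated into the regret bound.
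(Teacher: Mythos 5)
Your proposal is correct and follows essentially the same route as the paper: it rests on the a-posteriori estimate $d(\nu,\nu^*)\le\tfrac{1}{1-r}\,d(\nu,P_k^*\nu)$ (the paper's inequality (13)) together with the contraction property of $P_k^*$ from Lemma \ref{lem55}, exactly as in the paper's own argument, which merely routes the bound on $d(\nu_k^1,\nu_k^2)$ through the telescoped Cauchy estimate (20) instead of a single contraction step. Your observation that the direct argument yields the sharper constant $\tfrac{r}{1-r}$, with the stated $\tfrac{r}{(1-r)^2}$ being a loosening, is accurate, as is your remark that Assumption \ref{as:varying} is not actually used within a single epoch.
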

\begin{proof}
Define for iteration, $P_k^*(\nu_k^{i-1})=\nu_k^{i}$, then by induction on $i$ and due to contractive property of $P_k^*$ it is easy to see for any $i$ we have 
\begin{align}
d(\nu_k^{i+1}, \nu_k^i)\le r^i d(\nu_k^1, \nu_k^0)
\end{align}
Thus for any $j>i$, we have the following inequality 
\begin{align}
d(\nu_k^j, \nu_k^i)&\le d(\nu_k^j, \nu_k^{j-1})+d(\nu_k^{j-1}, \nu_k^{j-2})+\dots+d(\nu_k^{j-(j-i-1)}, \nu_k^{j-(j-i)})\nonumber \\
&\le r^{j-1}d(\nu_k^1, \nu_k^0)+r^{j-2}d(\nu_k^1, \nu_k^0)+\dots+r^i d(\nu_k^{1}, \nu_k^{0}) \quad (\because (19))\nonumber\\
&= \left (\sum\limits_{s=1}^{j-i-1} r^s\right )\times r^i\times d(\nu_k^{1}, \nu_k^{0})\le \left (\sum\limits_{s=1}^{\infty} r^s\right )\times r^i\times d(\nu_k^{1}, \nu_k^{0})\nonumber\\
&= \left (\frac{1}{1-r}\right )\times r^i\times d(\nu_k^{1}, \nu_k^{0}).
\end{align}
Combining $(13)$ and $(20)$ we get 
\begin{align}
d(\nu_k^1, \nu_k^*)&\le \left [\frac{r}{(1-r)^2}\right ] d(\nu_k^{1}, \nu_k^{0})
\end{align}
and that also holds in the large limit of $k$.
\end{proof}

Let us now consider the difference tracking error of the operator $P_k^*$ and the best possible tracking error across a family ${\mathcal G}$ of operators. This is sometimes known as the regret:
\begin{definition}
\begin{align}
\regret(P_k^*) := 
\sup_{g \in \mathcal G}
\left( \sum\limits_{k=0}^{\infty}\sum_{s = 1}^{t} d(\nu_k^s, \nu_k^*)
- \sum\limits_{k=0}^{\infty}\sum_{s = 1}^{t} d(g \nu_k^s, \nu_k^*)
\right).
\end{align}
\end{definition}

Intuitively, the regret is zero, if our convergence is the fastest possible.
We can bound regret as follows:

\begin{theorem}[Regret bound in the on-line setting] 
\label{thm:regret}
Let Assumptions~\ref{as:varying} hold. For $t$ iterated function applications after each change at $k$, we have:
\begin{align}
\regret(P_k^*) 
\le \sum_{k} \left[\frac{2r}{(1-r)^2}\right]d(\nu_k^1, \nu_k^0).
\end{align}
\end{theorem}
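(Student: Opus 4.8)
The plan is to reduce the regret, which compares our operator's cumulative tracking error against that of the best comparator in $\mathcal{G}$, to a per-episode geometric sum that is already essentially controlled by Theorem~\ref{thm:error}. Write $A := \sum_{k}\sum_{s=1}^{t} d(\nu_k^s, \nu_k^*)$ for our own cumulative tracking error and $B(g) := \sum_{k}\sum_{s=1}^{t} d(g\nu_k^s, \nu_k^*)$ for that of a comparator $g$, so that $\regret(P_k^*) = \sup_{g\in\mathcal{G}}\,[A - B(g)]$. Since every $B(g)$ is a sum of non-negative distances, the cleanest estimate is simply $\regret(P_k^*) \le A$, which would already yield a bound with constant $\tfrac{r}{(1-r)^2}$. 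To match the stated constant I would instead retain the comparator term and use the two-sided estimate $A - B(g) \le A + B(g)$, valid because $B(g)\ge 0$, together with the fact that each admissible comparator fixes $\nu_k^*$ and is non-expansive for the metric $d$, so that $d(g\nu_k^s,\nu_k^*) = d(g\nu_k^s,g\nu_k^*) \le d(\nu_k^s,\nu_k^*)$ termwise and hence $B(g)\le A$ uniformly in $g$. This gives $\regret(P_k^*) \le 2A$.

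It then remains to bound the single-episode sum $\sum_{s=1}^{t} d(\nu_k^s,\nu_k^*)$ by $\tfrac{r}{(1-r)^2}\,d(\nu_k^1,\nu_k^0)$. Here I would reuse the machinery from the proof of Theorem~\ref{thm:error}: the per-step contraction $d(\nu_k^{i+1},\nu_k^i)\le r^{i}d(\nu_k^1,\nu_k^0)$ and the telescoped geometric estimate $d(\nu_k^j,\nu_k^i)\le \tfrac{r^{i}}{1-r}d(\nu_k^1,\nu_k^0)$ give, upon letting the upper index $j\to\infty$ so that $\nu_k^j\to\nu_k^*$, the bound $d(\nu_k^s,\nu_k^*)\le \tfrac{r^{s}}{1-r}\,d(\nu_k^1,\nu_k^0)$ for each $s\ge 1$. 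Summing the geometric series $\sum_{s\ge 1} r^{s} = \tfrac{r}{1-r}$ produces $\sum_{s=1}^{t} d(\nu_k^s,\nu_k^*) \le \tfrac{r}{(1-r)^2}\,d(\nu_k^1,\nu_k^0)$; summing over $k$ and applying the factor $2$ from the comparator step delivers exactly $\sum_k \tfrac{2r}{(1-r)^2}\,d(\nu_k^1,\nu_k^0)$.

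The main obstacle I anticipate is not the geometric summation, which is routine given Theorem~\ref{thm:error}, but the clean handling of the supremum over $\mathcal{G}$: the factor $2$ is genuinely a loose artefact, since dropping the non-negative term $B(g)$ already yields factor $1$, and recovering the stated constant requires committing to structural hypotheses on $\mathcal{G}$ (that its members share the fixed point $\nu_k^*$ and are non-expansive in the Kantorovich--Rubinstein metric) so that $B(g)\le A$ holds uniformly. A secondary point is that the right-hand side is itself an infinite sum over $k$; under Assumption~\ref{as:varying} the per-episode displacements $d(\nu_k^1,\nu_k^0)$ are controlled through the mechanism of Theorem~\ref{thm56}, so the result is best read as an inequality between (possibly divergent) series rather than as a finite-regret guarantee, and I would make that reading explicit.
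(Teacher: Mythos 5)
Your proof is correct and follows essentially the same route as the paper: drop (or dominate) the comparator term in the supremum, bound each per-episode tracking error $d(\nu_k^s,\nu_k^*)$ by $\tfrac{r^s}{1-r}\,d(\nu_k^1,\nu_k^0)$ using the telescoped contraction estimates, and sum the geometric series over $s$ and then over $k$. The only difference is bookkeeping of the slack: you manufacture the factor $2$ from the comparator step $A-B(g)\le 2A$ (which, as you note, is unnecessary since $A-B(g)\le A$ already suffices), whereas the paper obtains it by carrying a redundant extra $\tfrac{r}{(1-r)^2}$ term in the per-episode sum; both are loose, and your per-episode bound $\tfrac{r}{(1-r)^2}\,d(\nu_k^1,\nu_k^0)$ is in fact the sharper one.
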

\begin{proof}
\begin{align*}
\regret(P_k^*) & \le \sum_{k=0}^{\infty}\sum_{s = 1}^{t} d(\nu_k^s, \nu_k^*) \nonumber\\
&= \sum \left[ d(\nu_k^t,\nu_k^*)+ d(\nu_k^{t-1}, \nu_k^*)+\dots+ d(\nu_k^1, \nu_k^*)\right]\nonumber\\
&\le \sum \left[ d(\nu_k^t,P_k^*\nu_k^t)+ d(\nu_k^{t-1}, P_k^*\nu_k^{t-1})+\dots+ d(\nu_k^1, P_k^*\nu_k^{1})\right] \quad (\because 13)\nonumber\\
&\le \sum \left[\frac{r^{t}}{1-r}+\frac{r^{t-1}}{1-r}+\dots+\frac{r}{1-r}+\frac{r}{(1-r)^2}\right]d(\nu_k^1, \nu_k^0)\nonumber\quad (\because 20, 21)\\
&= \sum \left[\frac{r}{1-r}\left(r^{t-2}+\dots+r+1\right)+\frac{r}{(1-r)^2}\right]d(\nu_k^1, \nu_k^0)\nonumber\\
&\le \sum \left[\frac{r}{1-r}\left(\sum\limits_{t=1}^{\infty}r^{t}\right)+\frac{r}{(1-r)^2}\right]d(\nu_k^1, \nu_k^0)\nonumber\\
&= \sum \left[\frac{2r}{(1-r)^2}\right]d(\nu_k^1, \nu_k^0)\nonumber\\
\end{align*}
\end{proof}

We stress that this bound is only rudimentary and can be improved upon.

\section{A Computational Illustration}

Let us illustrate the main features of our results in simulations related to the famous maple-leaf fractal of Barnsley.
In particular, we consider four two-dimensional iterated random functions:
\begin{align*}
f_1:& \begin{bmatrix}x\\y\end{bmatrix}\mapsto \begin{bmatrix}0.8&0\\0&0.8\end{bmatrix}\begin{bmatrix}x\\y\end{bmatrix}+\begin{bmatrix}0.1\\0.04\end{bmatrix},\\
f_2:& \begin{bmatrix}x\\y\end{bmatrix}\mapsto\begin{bmatrix}0.5&0\\0&0.5\end{bmatrix}\begin{bmatrix}x\\y\end{bmatrix}+\begin{bmatrix}0.25\\0.4\end{bmatrix},\\
f_3:& \begin{bmatrix}x\\y\end{bmatrix}\mapsto \begin{bmatrix}0.355&0.355\\0.355&-0.355\end{bmatrix}\begin{bmatrix}x\\y\end{bmatrix}+\begin{bmatrix}0.266\\0.078\end{bmatrix},\\
f_4:& \begin{bmatrix}x\\y\end{bmatrix}\mapsto \begin{bmatrix}0.355&-0.355\\-0.355&-0.355\end{bmatrix}\begin{bmatrix}x\\y\end{bmatrix}+\begin{bmatrix}0.378\\0.434\end{bmatrix}. 
\end{align*}
There are 3 different probability distribution $\mu_0, \mu_1, \mu_2$ for sampling from the index set $\{1,2,3,4\}$ of the four functions, as summarized in Table \ref{ex-prob}. Notice that $\{\mu_k\}_{k=0}^{2}$ are chosen such that Assumption \ref{as:varying} is satisfied. Our simulations are performed in Matlab and $30,000$ iterations between each change in $\mu_i$, i.e., from $\mu_0$ to $\mu_1$ and from $\mu_1$ to $\mu_2$, giving a total of $90,000$ points in each sample path. 

\begin{table}[h!]
\centering
\caption{Time-varying measures $\mu_k$ for our illustrative example.}
\label{ex-prob}
\begin{tabular}{|m{2cm}|m{2cm}|m{2cm}|m{2cm}|m{2cm}|m{2cm}|}
\hline
\multicolumn{2}{|c|}{Iteration with $\mu_0$}&\multicolumn{2}{|c|}{Iteration with $\mu_1$ }&\multicolumn{2}{|c|}{Iteration with $\mu_2$}\\
\hline
Functions & Probabilities & Functions & Probabilities & Functions & Probabilities \\
\hline
$f_1$& 0.23& $f_1$ & 0.5& $f_1$ & 0.3\\
$f_2$& 0.22&$f_2$&0.2&$f_2$&0.1\\
$f_3$&0.22&$f_3$&0.2&$f_3$&0.4\\
$f_4$&0.33&$f_4$&0.1&$f_4$&0.2\\
\hline
\end{tabular}
\end{table}

First, let us illustrate iterated random functions 
with a stationary measure 
$\mu_0, \mu_1, \mu_2$, respectively.
Figure \ref{approxall} depicts the set obtained by iterated applications 
of the four Lipschitz transformations on $\mathbb R^2$,
with each of the three distributions $\mu_0, \mu_1, \mu_2$, 
plotted in red, green, and blue, 
respectively. 
The results depicted are in accordance with Theorem \ref{thm52}, 
which shows the existence of piecewise-invariant measure for the 
time-varying iterated function system,
which would be supported on the boundary of the red, green, and blue, 
``maple leaves''.

Next, let us demonstrate the measures over the state space.
Figures \ref{approxNu0}--\ref{approxNu2} depict the probability
 density functions for the $2$-dimensional sample data points,
 which are generated by iterations of the transformations by
  $\mu_0, \mu_1$ and $\mu_2$, respectively. These are, indeed, 
  approximated by a histogram, which suitably discretizes the 
  2 dimensions.
Figure \ref{nuCompared} shows the impact of changes in the measure
 for sampling function with from $\mu_0$ (red) to $\mu_1$ (green) 
 and then $\mu_1$ (green) to $\mu_2$ (blue)
on the distribution over the 2-dimensional states.
Clearly, there are noticeable differences, as well as noticeable 
similarities. 

Finally, let us demonstrate the evolution of the Wasserstein distance between the subsequent measures over the state space and the Wasserstein distance of these measures to the invariant measure.
Figure \ref{nuWasser1} depicts the pairwise Wasserstein distance between the subsequent measures over the state space on our example. The plot shows that the distances between subsequent measures decay, as predicted by Theorem \ref{thm:error}.
Figure \ref{nuWasser2} illustrates the decay of distance to the invariant measure, as predicted by Theorem \ref{thm:error}. 
While \ref{thm:regret} bounds the rate of the decay only loosely, the actual rate of decay seems convincingly fast in this example.

\section{Conclusions}

We have introduced iterated piecewise-stationary random functions, a class of stochastic dynamical system on a Polish space. This extends iterated random functions, where there is a finite family of Lipschitz self maps and where the following point in the time series is determined by the previous one and the random choice of a function from a given family.
The extension considers the random variable directing the choice of the function to change after a certain number of function applications. That is, the random variable is time-varying, but piecewise identically distributed. 
This is the only ``time-varying'' extension of iterated random functions we are aware of, outside of Mendivil's \cite{mendivil2015time}, wherein all functions in the family change into identity over time. 
The long-term asymptotic behaviour of this class of process has been described and
we have shown the existence and uniqueness of the so-called piecewise-invariant measure (an ergodic property). 
In particular, in the complete and separable metric space set up, we have shown conditions for the existence of a piecewise-invariant measure in this new setting of piecewise-stationary random functions. A bound of the distance between two subsequent invariant measures has been shown, leading to  a bound on the tracking error and, admittedly loose, regret bound. Direct applications of this model can be found in modeling of computer networks \cite{corless2016aimd}, control of switched systems \cite{664150}, queuing theory \cite{IFS2001}, computer graphics \cite{barnsley1993}, and mathematical biology  \cite{Lasota1999,wojewodka2013exponential}. 

More broadly, we argue for the study of conditions, under which ensembles of agents that can be modeled as iterated function systems are ergodic under various feedback configurations. 
In this, we are motivated by examples from application domains, where such agents act under the influence of a dynamic price. 
As we have mentioned repeatedly, the issue of ergodicity, which is akin to reproducibility in resource-allocation problems, is at the heart of our concerns. 
That is, the resource-allocation to an individual agent can be reliably replicated across the ensemble, independent of the initial state of the agent.
This is clearly a fundamental requirement for service providers, who wish to issue contracts to individual agents, but operate systems wherein stochastic allocation policies are deployed. 
Our initial work \cite{FIORAVANTI2019} has shown that simple control algorithms (PI, I) do not often give rise to ergodic feedback systems, and hence, they cannot be used for contract design. 
In our current paper, we have extended the class of systems, wherein ergodic feedback systems can be designed.
This opens up a rich set of theoretical questions that are also of great practical importance. For example, questions such as preservation of ergodicity under feedback, the effect of interconnecting several ergodic populations, and dealing with more complicated time-varying resource-allocation problems, are all problems with a both great practical and theoretical appeal.

\bibliographystyle{IEEEtran}
\bibliography{References}

\FloatBarrier
\begin{figure}[htp]
\centering 
\includegraphics[width=14.35cm]{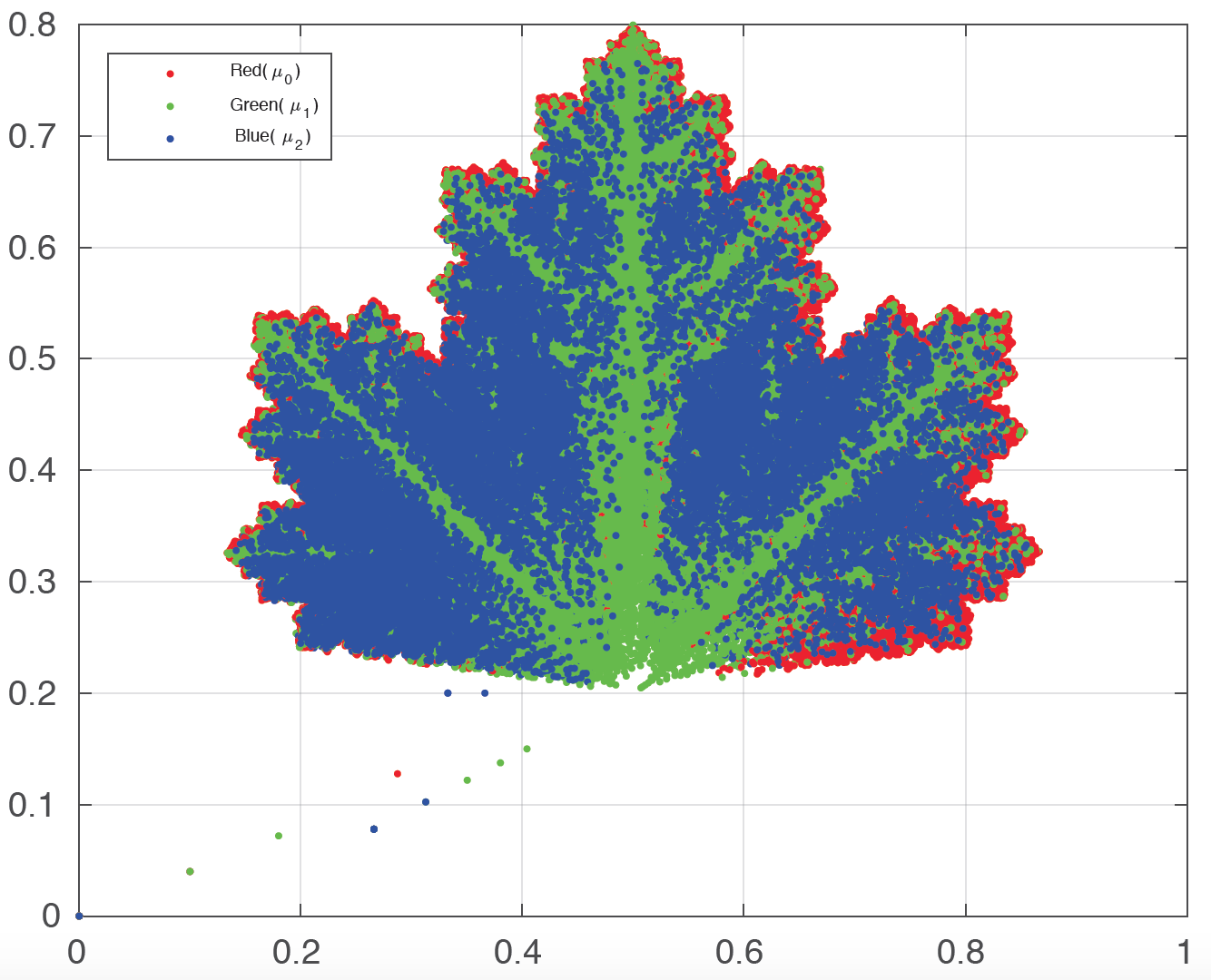}
\caption{A finite-sample approximation 
of the support of the measure over the 2-dimensional state-space 
generated using $\mu_0$ (in red), $\mu_1$ (in green), and $\mu_2$ (in blue), respectively, 
as the measure for sampling the 4 functions. 
Notice that only the boundary of the support corresponds to the
 support of the invariant measure.}
\label{approxall}
\end{figure}

\begin{figure}[htp]
\centering 
\includegraphics[trim = 14mm 100mm 18mm 5mm, clip, width=14.35cm]{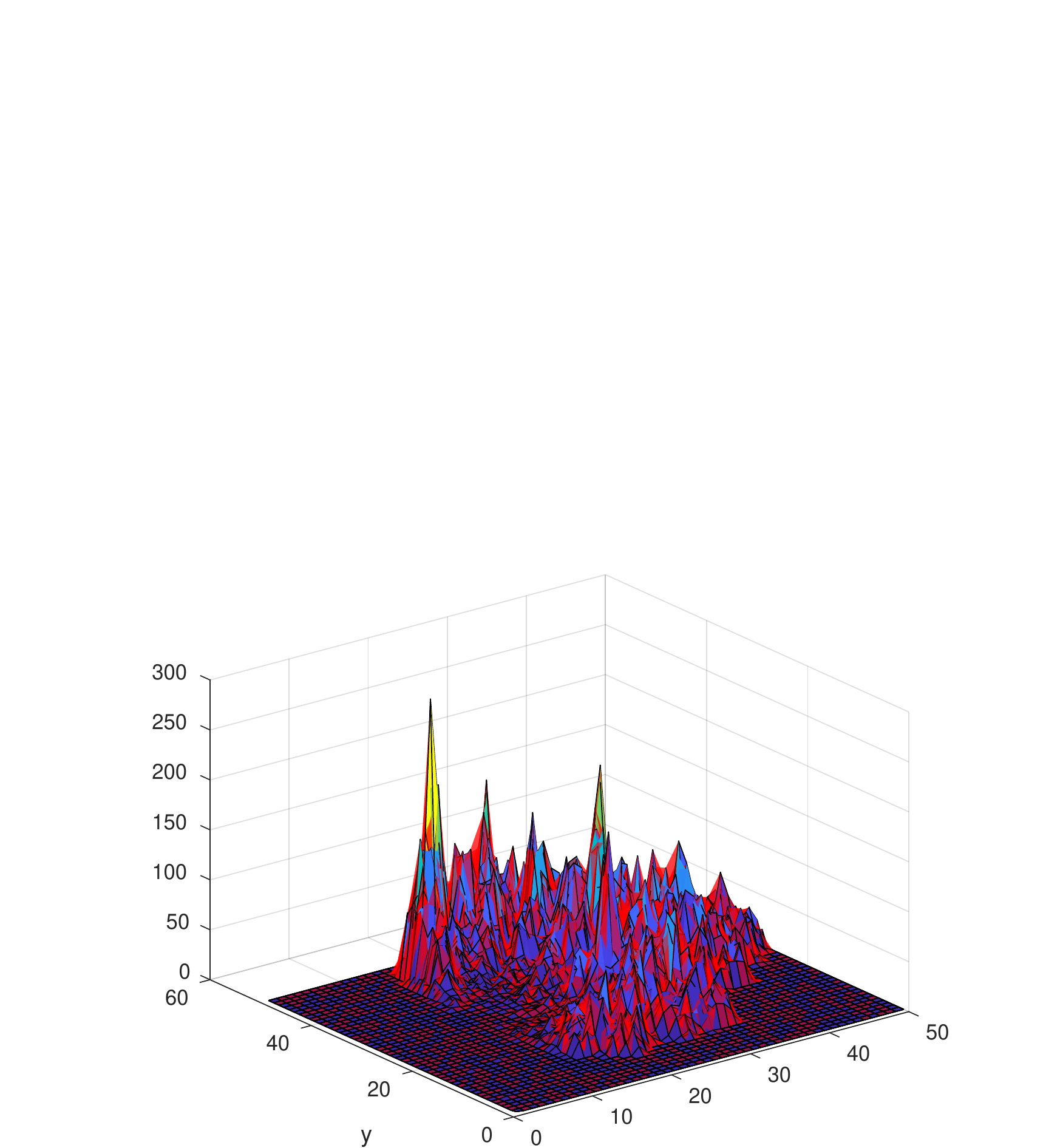}
\caption{An finite-sample histogram approximation 
of the measure over the 2-dimensional state-space 
 generated using $\mu_0$.}
\label{approxNu0}
\end{figure}

\begin{figure}[htp]
\centering 
\includegraphics[trim = 14mm 100mm 18mm 5mm, clip, width=14.35cm]{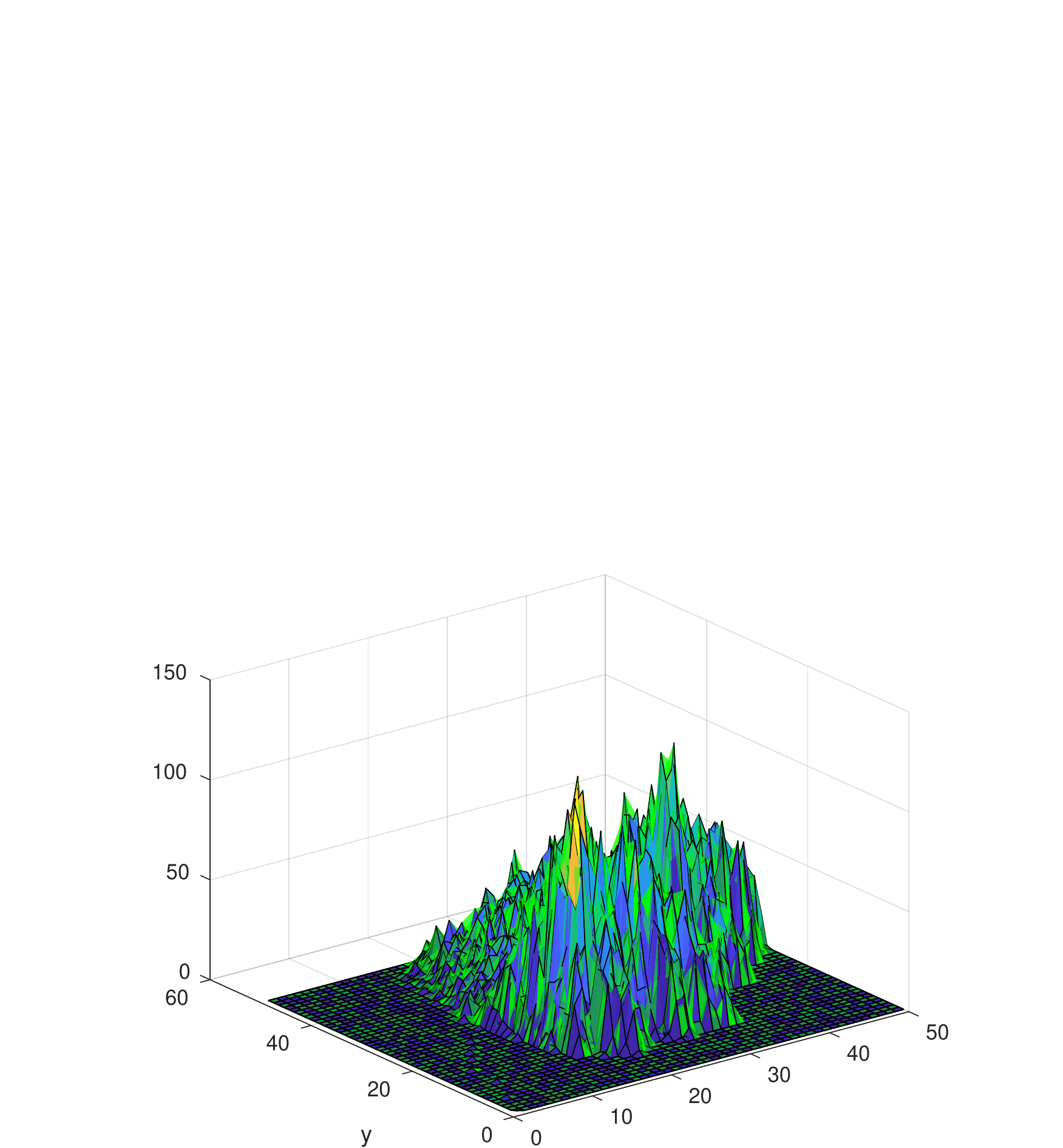}
\caption{A finite-sample histogram approximation 
of the measure over the 2-dimensional state-space 
 generated using $\mu_1$.}
\label{approxNu1}
\end{figure}

\begin{figure}[htp]
\centering 
\includegraphics[trim = 14mm 100mm 18mm 5mm, clip, width=14.35cm]{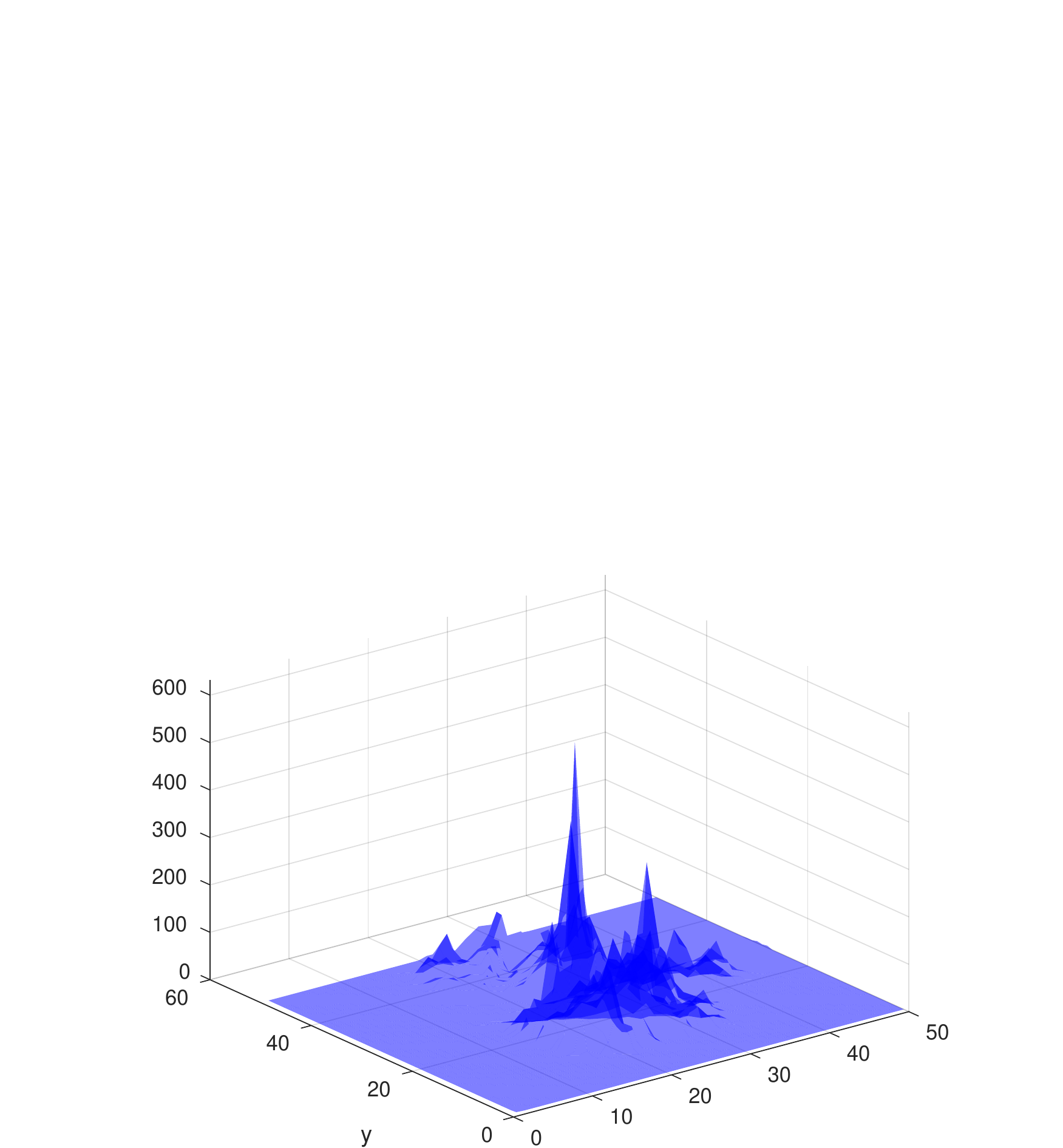}
\caption{A finite-sample histogram approximation 
of the measure over the 2-dimensional state-space 
 generated using $\mu_2$.}
\label{approxNu2}
\end{figure}

\begin{figure}[htp]
\centering 
\includegraphics[trim = 14mm 100mm 18mm 5mm, clip, width=14.35cm]{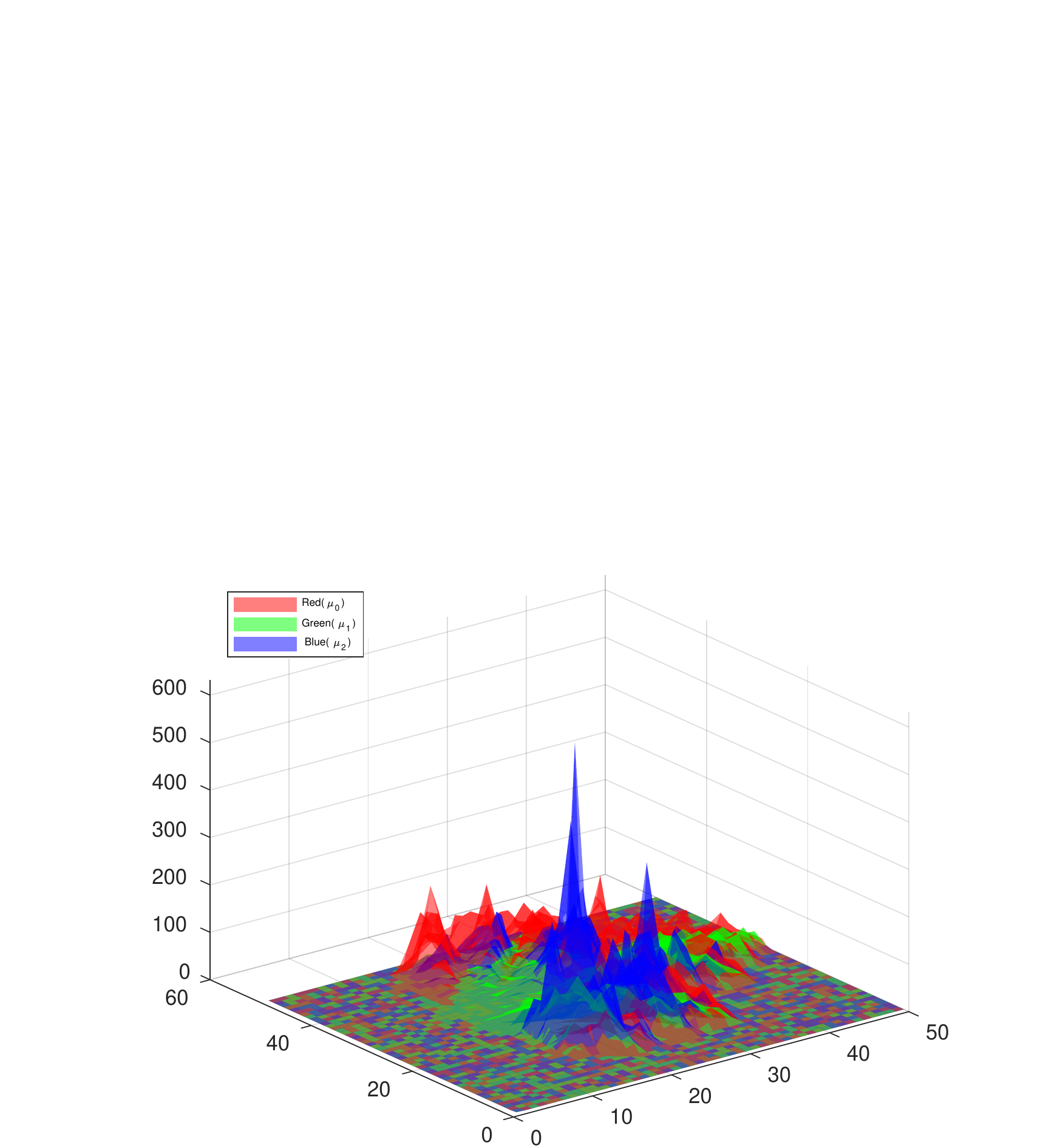}
\caption{A comparison of measures over the 2-dimensional state-space for three measures $\mu_k$ over the family of 4 functions.}
\label{nuCompared}
\end{figure}

\begin{figure}[htp]
\centering 
\includegraphics[trim = 14mm 100mm 18mm 5mm, clip, width=14.35cm]{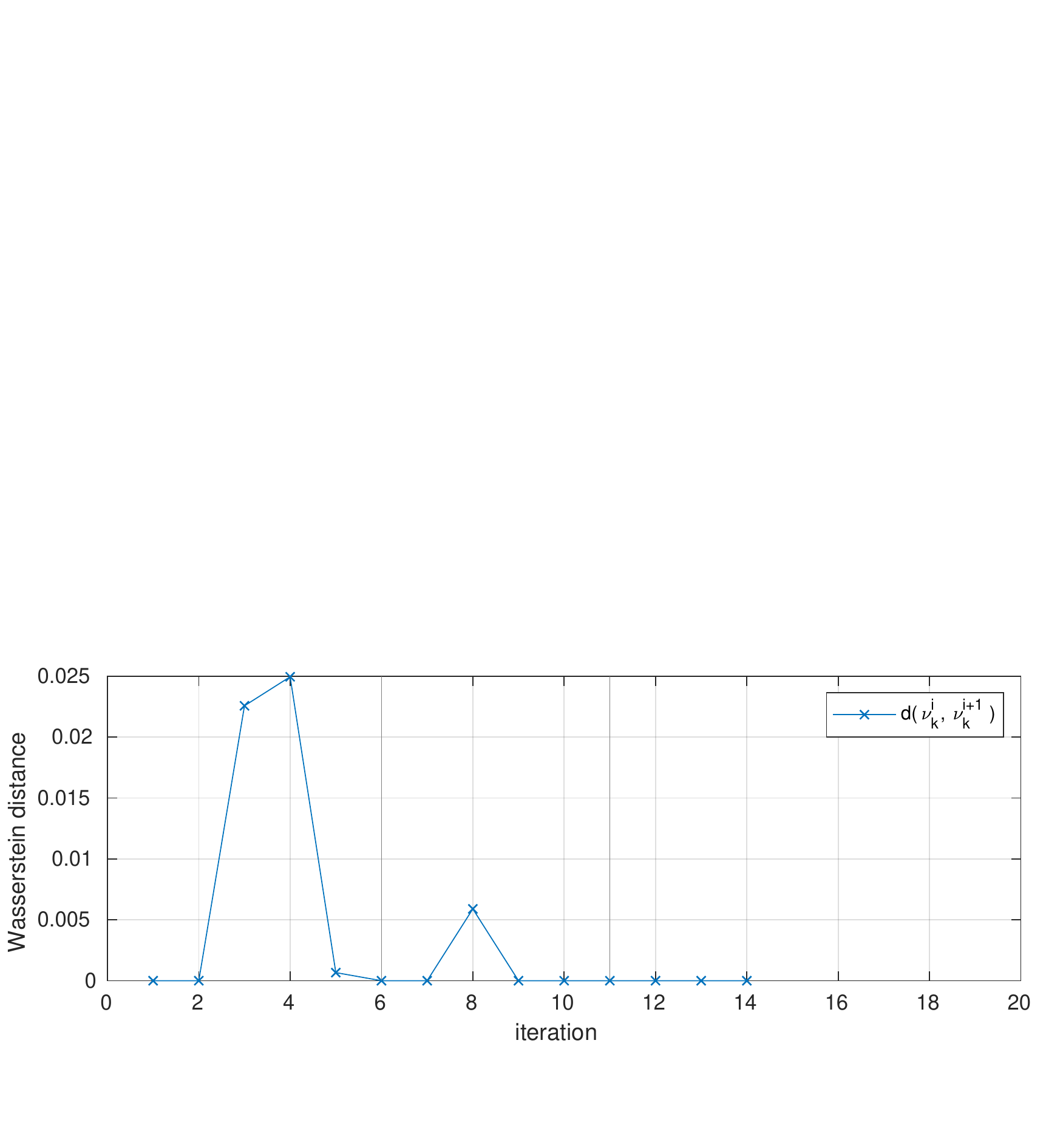}
\caption{Evolution of pairwise distances between subsequent measures over the 2-dimensional state-space.}
\label{nuWasser1}
\end{figure}

\begin{figure}[htp]
\centering 
\includegraphics[trim = 14mm 100mm 18mm 5mm, clip, width=14.35cm]{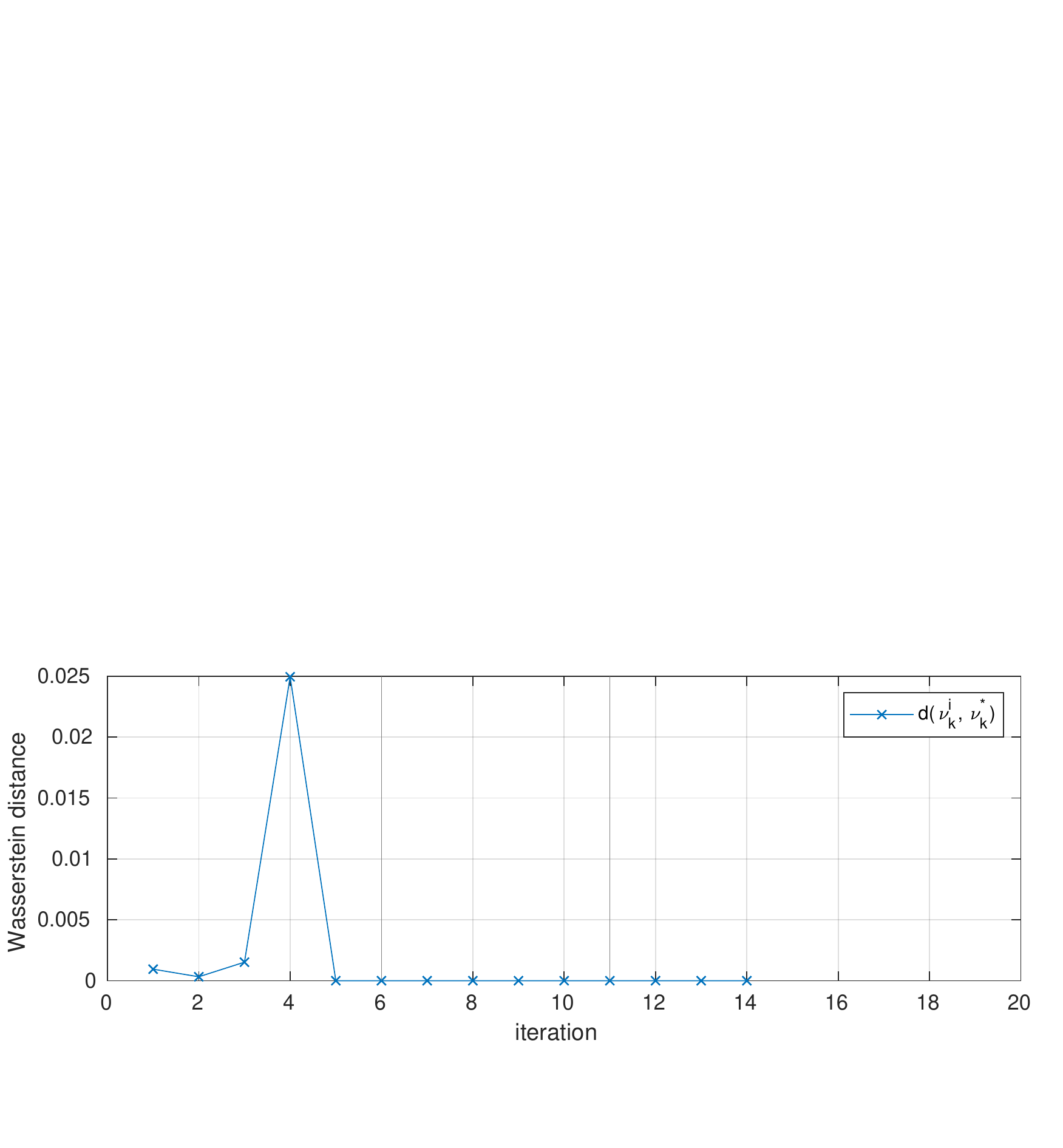}
\caption{Evolution of pairwise distances between a measure over the 2-dimensional state-space and the corresponding invariant measure.}
\label{nuWasser2}
\end{figure}


\end{document}